\numberwithin{equation}{section}
\newtheorem{theorem}[equation]{Theorem}
\newtheorem{cor}[equation]{Corollary}
\newtheorem{lemma}[equation]{Lemma}
\theoremstyle{definition}
\newtheorem{remark}[equation]{Remark}
\newcommand{\la}{\langle}
\newcommand{\ra}{\rangle}
\newcommand{\R}{\mathbb{R}}
\newcommand{\calC}{\mathcal{C}}
\def\div{\operatorname{div}}
\DeclareMathOperator{\rot}{rot} 
\def \bX{{\mathbf X}}
\def \LL{L^2_0(\Omega)}
\def \bL{{\mathbf L}}
\def \bW{{\mathbf W}}
\def \bH{{\mathbf H}}
\def \bC{{\mathbf C}}
\def \bu{{\mathbf u}}
\def \ba{{\mathbf a}}
\def \blf{{\mathbf f}}
\newcommand{\bpsi}{{\boldsymbol \psi}}
\newcommand{\bchi}{{\boldsymbol \chi}}
\def \bw{{\mathbf w}}
\def \bn{{\mathbf n}}
\def \be{{\mathbf e}}
\def \bl{{\mathbf l}}
\def \dO{{\partial\Omega}}
\newcommand{\bv}{{\bf v}}
\newcommand{\bg}{{\bf g}}
\newcommand{\diff}{\, \mbox{\rm d}}
\title{On well-posedness of a velocity-vorticity formulation of the Navier--Stokes equations with no-slip boundary conditions}
\author{Maxim A. Olshanskii\thanks{Department of Mathematics, University of Houston, Houston, TX, 77204;
email: molshan@mat.uh.edu, partially supported by NSF Grant DMS 1522252 and ARO 65294-MA.}
\and
Leo G. Rebholz\thanks{Department of Mathematical Sciences, Clemson University, Clemson, SC, 29634;
email: rebholz@clemson.edu, partially supported by NSF Grant DMS 1522191 and U.S. Army Grant 65294-MA.}
\and
Abner J. Salgado\thanks{Department of Mathematics, University of Tennessee, Knoxville, TN 37996, USA.
email: asalgad1@utk.edu, partially supported by NSF Grant DMS 1418784.}
}
\begin{document}
\date{}
\maketitle

\begin{abstract}
We study well-posedness of a velocity-vorticity formulation of the Navier--Stokes equations, supplemented with no-slip velocity boundary conditions, a no-penetration vorticity boundary condition, along with a natural vorticity boundary condition depending on a pressure functional. In the stationary case we prove existence and uniqueness  of a suitable weak  solution to the system under a small data condition. The topic of the paper is driven by recent developments of vorticity based numerical methods for the Navier--Stokes equations.
\end{abstract}

\section{Introduction}
\label{sec:Intro}

The evolution of an incompressible, viscous Newtonian fluid is governed by the
Navier--Stokes equations (NSE), which for a given bounded, connected domain $\Omega\subset\mathbb{R}^3$ with a piecewise, smooth, Lipschitz boundary $\partial\Omega$, an end time $T$, an initial condition $\bu_0:\Omega \to \mathbb{R}^3$ and force field $\blf:(0,T]\times\Omega\rightarrow \mathbb{R}^3$ read: find a velocity field $\bu:[0,T]\times\Omega\rightarrow \mathbb{R}^3$ and a pressure field $p:(0,T]\times\Omega\rightarrow \mathbb{R}$ such that
\begin{equation}
\label{eq:NSE}
\begin{aligned}
  \frac{\partial \bu}{\partial t} + (\bu \cdot\nabla)\bu - \nu \Delta \bu  + \nabla p = \blf &\quad \text{in } (0,T]\times\Omega,\\
  {\nabla\cdot\bu} = 0 &\quad \text{in } (0,T]\times\Omega,\\
  \bu|_{t=0}=\bu_0 &\quad \text{in } \Omega,\\
\end{aligned}
\end{equation}
where $\nu>0$ is the kinematic viscosity. The system must also be equipped with appropriate boundary conditions.

For suitably smooth solutions, equations \eqref{eq:NSE} can be re-formulated in other equivalent forms, in particular through the introduction of new variables such as vorticity, streamfunction,
rate of strain tensor, Bernoulli pressure, etc., see, e.g., \cite{GR86,G89,majda2002vorticity}.
While different formulations often provide useful insights into both physical and mathematical properties of 
NSE solutions,
it is believed that in general they do not include  more information than that already contained in \eqref{eq:NSE}. The situation changes, however, if one is interested in numerical solutions. It is well known that by applying a discretization method to different equivalent formulations of the equations, one commonly obtains \emph{non}-equivalent discrete problems with strikingly different numerical properties and, for particular problems, appropriate discretizations of certain formulations outperform others.
For this reason, the literature on computational methods for \eqref{eq:NSE} often considers
formulations different from the primitive variables ``convective'' formulation in \eqref{eq:NSE}.  Among these, we can mention the conservative form, skew-symmetric form, vorticity-streamfunction formulation, streamfunction formulation, rotational form, and EMAC form. We refer the reader to \cite{CHOR16,GS98,laytonbook,T77} for a description of these formulations and their discretizations.

For numerical analysis of discretization methods, such as a finite element method,  one  typically needs  discrete counterparts of fundamental  a priori bounds and well-posedness results. However due to the reasons outlined above, for discrete solutions such results often cannot be obtained by resorting to the primitive variables that describe the equations  in \eqref{eq:NSE}. This is the main motivation of this work where, through direct arguments, we show well-posedness of a particular formulation of NSE based on vorticity dynamics.
Our analysis overcomes two major (technical) difficulties:
\begin{enumerate}[(i)]
  \item The \emph{implicit} enforcement of the relation $\bw={\rot}\,\bu$ between two independent variables, vorticity and velocity of the fluid.  In fact, our arguments extend to the more general case when this identity is no longer true.

  \item Imposing the practically important no-slip conditions on solid walls on the velocity $\bu$ leads to natural boundary conditions for the vorticity $\bw$, which involve boundary values of the pressure variable.
\end{enumerate}
The two observations above have two immediate consequences: first, the energy estimate for $\bu$ does not necessarily  imply any bound for $\bw$; and, second, one has to look for more regular weak solutions to give sense to the vorticity boundary conditions on solid walls.
A key new technical result in this paper is the well-posedness and regularity of a Stokes-type problem with non-standard boundary conditions. The problem was introduced in \cite{girault1990curl} but, to the best of our knowledge, it has not been further explored in the literature.

Velocity-vorticity formulations  have been widely used in the numerical literature  and have been found to provide greater accuracy in simulations (compared to more common formulations), especially for flows where boundary effects are critical \cite{WB02,LYM06,MF00,G91,GHH90,WWW95,HOR17}.
Such formulations directly build on the equation of the  vorticity dynamics,
\begin{equation}\label{vort}
\frac{\partial {\bw}}{\partial t} - \nu \Delta {\bw} +
(\bu \cdot \nabla) {\bw}  - (\bw \cdot\nabla) \bu =
 {\rot}\,\blf.
\end{equation}
There are several ways to close the system of equations for $\bw$ and $\bu$. One common way is to  supplement \eqref{vort} with the vector Poisson problem for the velocity, $\Delta\bu={\rot}\,\bw$. In this paper, we consider the formulation that couples  \eqref{vort} with the rotational form of the momentum equation,
\begin{equation}\label{moment}
\frac{\partial \bu}{\partial t} - \nu \Delta \bu +\bw\times \bu + \nabla P =\blf.
\end{equation}
We are motivated to follow this path by the recent development of numerical methods for this coupling \cite{OR10,LOR11,BORW12,palha2017mass,HOR17}. As these references show, these schemes provide superior long-time numerical stability and more accurate fulfillment of discrete conservation laws (in addition to other well-known benefits of vorticity based numerical formulations).

The system of velocity-vorticity equations should be supplemented with suitable boundary conditions for both $\bu$ and $\bw$. The most practically important are no-slip, no-penetration conditions on solid boundaries of the fluid domain, i.e., $\bu={\bf 0}$. The corresponding vorticity boundary conditions are not straightforward to obtain and were actually the subject of a long and controversial discussion in the literature; see, e.g., \cite{GHOR15} for a brief overview. The normal condition $\bw\cdot\bn=0$ on $\dO$ is the immediate consequence of $\bu={\bf 0}$ and $\bw=\rot\bu$ for $\bu$ smooth up to the boundary. Here and further $\bn$ is the unit normal vector on $\dO$.   To define tangential vorticity boundary conditions, we follow \cite{GHOR15}, where the following physically consistent boundary condition on solid walls was derived,
\begin{equation}\label{bc}
\nu (\rot \bw)\times\bn =
({\blf}-\nabla P)\times\bn.
\end{equation}
This boundary condition can be obtained by taking the tangential component of \eqref{moment} on $\dO$, and recalling that $\Delta\bu=\rot\bw$ and $\bu={\bf 0}$ on $\dO$. It can be shown, cf. \cite{GHOR15}, that for $\blf={\bf 0}$, condition \eqref{bc} characterizes  the production of the {streamwise} and {spanwise}  vorticity on  no-slip boundaries due to the tangential pressure gradient and vorticity  intensification, depending on the shape of $\dO$. The vorticity boundary condition \eqref{bc} was used in \cite{GHOR15,HOR17} to produce numerical simulations with excellent results.
We shall see that the condition in \eqref{bc} is a natural boundary condition, which enters the weak formulation as a functional.

Based on the numerical success of the velocity-vorticity  formulation \eqref{vort}--\eqref{moment} with boundary conditions $\bu={\bf 0}$, $\bw\cdot\bn=0$ on $\dO$, and \eqref{bc}, a theoretical study of the underlying PDE system is warranted.  Hence, the purpose of this work is to initiate the study of the velocity-vorticity system equipped with no-slip velocity boundary conditions, and the corresponding vorticity boundary conditions.  We shall focus on a stationary version of the problem:
\begin{equation}
\begin{split}
 \alpha\bu- \nu \Delta {\bu} + \bw\times {\bu} + \nabla P &= \blf, \\
 \div \bw= \div \bu &= 0,\\
 \alpha\bw- \nu \Delta \bw +
(\bu \cdot \nabla) \bw  - (\bw \cdot\nabla) \bu +\nabla \eta &=
 \rot \blf, \\
 \bu |_{\partial\Omega}& ={\bf 0},\\
 \bw\cdot\bn  |_{\partial\Omega} &=0, \\
 \left(\nu \rot \bw\times\bn -
(\blf-\nabla P)\times\bn \right)  |_{\partial\Omega} &= {\bf 0},
\end{split}
\label{VVS}
\end{equation}
with $\alpha\ge0$.
For this system we will show existence and uniqueness of solutions, provided that the problem data is sufficiently small, or $\alpha$ is sufficiently large.  We introduced the non-negative parameter $\alpha$ in the system, with the aim of covering the case when an additional zero order term results from the approximation of time derivative by a finite difference. One, of course, would be also interested in treating  the true semi-discrete problem and passing to the limit of time step turning to zero. However, this seems beyond our reach, and so we cannot provide an existence result for the full time dependent problem. This, in particular, is due to the presence of the pressure in the vorticity boundary conditions \eqref{bc}.
Nevertheless, the partial analysis that we provide here comes with its own set of unique features. As mentioned above, to overcome them, we need to analyze a class of Stokes-type problems with nonstandard boundary conditions, and Helmholtz-Weyl decompositions of the space $\bL^r(\Omega)$, $r \neq 2$.

\begin{remark}[{solenoidal vorticity}]
We note that \eqref{VVS} explicitly enforces the divergence free condition for vorticity  with the help of the Lagrange multiplier $\eta$ in the vorticity equation. There are several reasons for doing this. First, we find that this helps us in the analysis. Second, this is motivated by numerical experience, since a common finite element method for \eqref{vort} does not yield divergence free discrete solutions and enforcing it as a constraint benefits the accuracy of the recovered vorticity field. Finally, all the arguments in this paper extend now in a straightforward way to a more generic system of equations, where $\rot \blf$ on the right-hand side of the vorticity equation in \eqref{VVS} is replaced by a sufficiently regular source term $\mathbf{f_w}$, not necessary divergence free, which can be interpreted as modeling external sources of vorticity production.

We also note the alternative way of enforcing $\div\bw= 0$ by re-writing the vorticity equation \eqref{vort} in the form
\begin{equation}
\frac{\partial {\bw}}{\partial t} - \nu \Delta {\bw} + 2D(\bw)\bu - \nabla \eta = \rot \blf.
\label{VVH2}
\end{equation}
In this equation, $\eta$ has the physical meaning of the \emph{helical density} (rather than an auxiliary variable) and $D(\bw)\bu = \bu \cdot \nabla \bw - \tfrac12 (\rot\ \bw) \times \bu$.  When \eqref{VVH2} is used as the vorticity equation in \eqref{VVS}, the system is called the velocity-vorticity-helicity (VVH) formulation of the NSE~\cite{OR10}.  It is unique in the fact that it naturally enforces the {mathematical constraint} that the vorticity be divergence-free, and also that it solves for helicity directly (via helical density). The analysis of the paper can be extended to the system \eqref{VVS}, where the vorticity equation is written in the vorticity--helical density form \eqref{VVH2}, but we shall not elaborate further on this.
\end{remark}

Our presentation is {organized} as follows. In section \ref{sec:prelim} we provide some preliminary results along with some new results concerning the Stokes problem with nonstandard boundary conditions. In section \ref{sec:stationary} we show that system \eqref{VVS} is well posed. The analysis follows carefully the dependence of all estimates on problem  parameters, since such dependence is of interest for understanding the properties of numerical solutions. In particular, we are interested in the dependence on $\alpha$, since it comes from the time-discretization and so is  ``user-controlled''. 
Conclusions and potential directions for future work are provided in section \ref{sec:conclusion}.

\section{Preliminaries}
\label{sec:prelim}
In this work we assume that the fluid occupies a bounded, simply connected domain $\Omega \subset \R^3$, with $\dO\in C^3$. We will follow standard notation concerning function spaces. In particular, we will denote the $L^2(\Omega)$ inner product and norm by $(\cdot,\cdot)$ and $\| \cdot \|$, respectively.  All other norms will be clearly labeled.

The natural function spaces for velocity, vorticity and pressure in the presence of solid walls are
\[
  \bX = \bH^1_0(\Omega), \quad
  \bW  = \left\{ \bv \in \bH^1(\Omega):\ \bv \cdot \bn|_{\partial\Omega} = 0 \right\}, \quad
  Q  =\LL\cap H^1(\Omega),
\]
where $\LL$ denotes the subspace of $L^2(\Omega)$ consisting of functions with zero mean. The Ne\v{c}as inequality implies that there exists $\beta_1>0$ such that
\begin{equation}
\beta_1 \| q \| \le \|\nabla q\|_{\bX'}\quad \forall~q\in\LL. \label{infsup1}
\end{equation}

\subsection{{The operators $\rot$ and $\div$}}
\label{sub:divcurl}

{Let us here recall some well-known results about the operators $\rot$ and $\div$, and explore some of their consequences that shall be useful in the sequel}. The assumptions on $\Omega$ yield that $\bW= \bH_0(\div)\cap\bH(\rot)$ both algebraically and topologically and, as shown in \cite[section 3.5]{GR86}, we additionally have that
\begin{align}
\|\bv\|&\le C_{0}(\|\rot\bv\|+\|\div\bv\|)~~\forall~\bv\in\bW, \label{DivRot}\\
\| \nabla \bv \|^2 &\le  C_1(  \| \rot \bv \|^2 +  \| \div \bv \|^2) ~~\forall~\bv\in\bW,\label{equiv}\\
\|\bv\|_{{ \bH}^2}&\le C_{2}(\|\bv\|+\|\rot\bv\|_{{ \bH}^{1}}+\|\div\bv\|_{{ \bH}^{1}})~~\forall~\bv\in\bW\cap {  \bH^2(\Omega)}. \label{DivRot2}
\end{align}
If $\Omega$ is convex, then $C_1=1$ in \eqref{equiv}.

Since $\Omega$ is assumed to be bounded, simply connected, and have a smooth boundary, we have the following result about the solvability of the equation $\rot\bv=\bu$, see \cite[Theorems 3.5 and 3.6]{GR86}.

\begin{lemma}[right inverse of $\rot$]
\label{LGirault}
Let $\bw\in \bH_0(\div)$ be divergence free, i.e. $\div\bw=0$ a.e. in $\Omega$. Then there is a unique vector potential $\bu\in \bH^1(\Omega)$ characterized by
\[
  \rot\bu=\bw,\quad \div\bu=0~~\text{in}~\Omega,\quad\bu\times\bn={\bf 0}~~\text{on}~\dO,\quad
 \|\bu\|_{\bH^1}\le c\|\bw\|.
\]
\end{lemma}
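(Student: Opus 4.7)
The statement is the classical Girault--Raviart theorem on divergence-free vector potentials, and my plan is to follow the extension-plus-correction strategy together with a uniqueness argument based on simple connectedness. For existence, since $\bw\in\bH_0(\div)$ with $\div\bw=0$, extending $\bw$ by zero to $\R^3$ produces $\tilde\bw\in\bL^2(\R^3)$ that is still divergence-free in the distributional sense (the vanishing normal trace on $\dO$ kills any singular contribution across the interface). I would then solve the vector Poisson problem $-\Delta\bpsi=\tilde\bw$ componentwise on a ball $B\supset\Omega$ with zero Dirichlet data (or, equivalently, via convolution with the Newton kernel). Elliptic regularity gives $\bpsi\in\bH^2(B)$ with $\|\bpsi\|_{\bH^2(B)}\le c\|\bw\|$, and taking the divergence of the equation together with $\div\tilde\bw=0$ and the boundary data forces $\div\bpsi=0$ in $B$. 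Setting $\bu_0:=\rot\bpsi$ and using $\rot\rot=-\Delta+\nabla\div$, I obtain $\rot\bu_0=\bw$ and $\div\bu_0=0$ in $\Omega$, with $\|\bu_0\|_{\bH^1(\Omega)}\le c\|\bw\|$.

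In general $\bu_0\times\bn$ does not vanish on $\dO$, so the second step is a boundary correction: I would seek $\bu=\bu_0-\nabla q$ with $q\in H^2(\Omega)$, a substitution that leaves $\rot\bu=\bw$ intact. The conditions $\div\bu=0$ and $\bu\times\bn={\bf 0}$ translate into the problem
\[
\Delta q=0\ \text{in}\ \Omega,\qquad \nabla q\times\bn=\bu_0\times\bn\ \text{on}\ \dO,
\]
which is a tangential first-order problem for $q|_{\dO}$ followed by harmonic extension. Its compatibility condition is that the surface curl of the boundary datum vanish, and this reduces via a surface-calculus identity to $\rot\bu_0\cdot\bn=\bw\cdot\bn=0$ on $\dO$, which holds by hypothesis. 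Standard estimates then give $\|q\|_{H^2}\le c\|\bu_0\|_{\bH^1}$ and close the $\bH^1$ bound for $\bu$.

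For uniqueness, the difference $\bv$ of two candidate potentials satisfies $\rot\bv=0$, $\div\bv=0$, $\bv\times\bn={\bf 0}$. Simple connectedness of $\Omega$ produces $\phi\in H^1(\Omega)$ with $\bv=\nabla\phi$, which is then harmonic with constant trace on each connected component of $\dO$; combining harmonicity with the boundary condition and the divergence theorem gives $\nabla\phi\equiv 0$, hence $\bv=0$. The main technical obstacle I anticipate is the compatibility check in the boundary-correction step: one must rigorously identify $\rot\bu_0\cdot\bn$ with the surface rotation of $\bn\times\bu_0$ on $\dO$ at the level of $H^{1/2}$-type traces, and it is precisely here that the $C^3$ regularity of $\dO$ assumed in the paper is used to justify the requisite surface calculus.
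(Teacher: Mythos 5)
The paper does not actually prove this lemma --- it is quoted directly from \cite[Theorems 3.5 and 3.6]{GR86} --- so your reconstruction is being measured against the classical Girault--Raviart argument, and your overall architecture (zero extension using $\bw\cdot\bn=0$, a vector potential built from a Newtonian potential, correction by a gradient $\nabla q$ whose solvability rests on $\rot\bu_0\cdot\bn=\bw\cdot\bn=0$, and uniqueness via $\rot\bv=0\Rightarrow\bv=\nabla\phi$) is exactly the right one.

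There is, however, one step whose justification as written fails. If you define $\bpsi$ by the componentwise Dirichlet problem $-\Delta\bpsi=\tilde\bw$ on a ball $B$ with $\bpsi|_{\partial B}=0$, then $\div\bpsi$ is harmonic in $B$ (since $\div\tilde\bw=0$), but the Dirichlet data do \emph{not} force $\div\bpsi$ to vanish on $\partial B$: setting $\bpsi=0$ on $\partial B$ kills only the tangential derivatives, while $\div\bpsi$ on the boundary still contains the normal derivative of the normal component, which is uncontrolled. Hence $\div\bpsi=0$ in $B$ does not follow, and without it $\rot\rot\bpsi=\tilde\bw+\nabla(\div\bpsi)\neq\tilde\bw$, so $\bu_0=\rot\bpsi$ is not a vector potential of $\bw$. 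The two constructions you call ``equivalent'' are not: only the convolution $E\ast\tilde\bw$ with the Newtonian kernel over all of $\R^3$ gives $\div(E\ast\tilde\bw)=E\ast(\div\tilde\bw)=0$, because convolution commutes with differentiation; that is the version you must use, with interior elliptic regularity supplying the $\bH^2$ bound on a neighborhood of $\overline\Omega$. With that repair the construction goes through. Two smaller caveats: (i) in the correction step, vanishing of the surface curl of the datum guarantees it is a surface gradient only because $\dO$ is a simply connected closed surface (connected boundary of a simply connected $\Omega\subset\R^3$); (ii) your uniqueness argument as stated ($\phi$ constant on each component of $\dO$) does not give $\nabla\phi\equiv0$ when $\dO$ has several components --- a harmonic function taking distinct constants on the two spheres bounding a shell has nonzero gradient --- so connectedness of $\dO$, which the paper assumes explicitly elsewhere (cf.\ Lemma~\ref{lem:HW1}), is genuinely needed here and should be invoked.
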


Notice that if $\bv\in\bH(\rot)$, then $\rot\bv\in\bH(\div)$, and so the condition $\bn\cdot\rot\bv=0$ on $\dO$ is well-defined. We therefore define the space
\[
\widetilde{\bH}(\rot):=\{\bv\in\bH(\rot)\,:\,\bn\cdot\rot\bv=0~~\text{on}~\dO\}.
\]
Owing to Lemma~\ref{LGirault}, we can characterize this space.

\begin{lemma}[characterization of $\widetilde{\bH}(\rot)$]
\label{L_decomp}
A function $\bv\in\widetilde{\bH}(\rot)$ iff the following decomposition holds
\begin{equation}\label{decomp}
\bv=\bu+\nabla q,\quad q\in H^1(\Omega),~\bu\in \bH^1(\Omega),~~s.t.~~ \div\bu=0~~\text{in}~\Omega,\quad\bu\times\bn=0~~\text{on}~\dO.
\end{equation}
\end{lemma}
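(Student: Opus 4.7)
The plan is to prove both implications, with the nontrivial one essentially a direct application of Lemma~\ref{LGirault}.

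For the easy direction ($\Leftarrow$), suppose $\bv=\bu+\nabla q$ with the stated properties. Then $\rot\bv=\rot\bu\in L^2(\Omega)$, so $\bv\in\bH(\rot)$. To show $\bn\cdot\rot\bv=0$ on $\dO$, I would use the integration-by-parts identity
\[
\int_\Omega \rot\bu\cdot\bpsi\,dx=\int_\Omega \bu\cdot\rot\bpsi\,dx+\int_{\dO}(\bu\times\bn)\cdot\bpsi\,dS,
\]
applied with $\bpsi=\nabla\phi$ for $\phi\in H^1(\Omega)$: the boundary term vanishes since $\bu\times\bn=0$, and $\rot\nabla\phi=0$, so $\rot\bu$ is $L^2$-orthogonal to all gradients. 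Integration by parts then yields $\langle\bn\cdot\rot\bu,\phi\rangle_{H^{-1/2},H^{1/2}}=0$ for every $\phi\in H^1(\Omega)$, which is the claim.

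For the main direction ($\Rightarrow$), let $\bv\in\widetilde{\bH}(\rot)$. Set $\bw:=\rot\bv\in L^2(\Omega)$. Since $\div\rot=0$ in the distributional sense and $\bn\cdot\bw=0$ on $\dO$ by hypothesis, we have $\bw\in\bH_0(\div)$ with $\div\bw=0$. Lemma~\ref{LGirault} therefore produces $\bu\in\bH^1(\Omega)$ satisfying
\[
\rot\bu=\bw=\rot\bv,\qquad \div\bu=0\text{ in }\Omega,\qquad \bu\times\bn=0\text{ on }\dO,\qquad \|\bu\|_{\bH^1}\le c\|\bw\|.
\]
Then $\rot(\bv-\bu)=0$ in $\Omega$, and since $\Omega$ is simply connected and $\bv-\bu\in L^2(\Omega)$, there exists $q\in H^1(\Omega)$, unique up to an additive constant, such that $\bv-\bu=\nabla q$. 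This yields the desired decomposition \eqref{decomp}.

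The only real obstacle is the hard direction, and it is entirely handled by Lemma~\ref{LGirault}: the key observation is that the boundary condition $\bn\cdot\rot\bv=0$ is exactly what places $\rot\bv$ into the class $\bH_0(\div)\cap\{\div=0\}$ to which the right inverse of $\rot$ applies. Everything else (integration by parts with the $\bu\times\bn$ trace, and the existence of the potential $q$ on a simply connected domain) is standard.
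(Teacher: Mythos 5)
Your proof is correct and follows essentially the same route as the paper: apply Lemma~\ref{LGirault} to $\bw=\rot\bv$ (which lies in $\bH_0(\div)$ with $\div\bw=0$ precisely because $\bv\in\widetilde{\bH}(\rot)$), obtain the vector potential $\bu$, and conclude $\bv-\bu=\nabla q$ since the difference is curl-free on a simply connected domain; the converse rests on $\bu\times\bn=\mathbf{0}$ implying $\bn\cdot\rot\bu=0$. You simply spell out the integration-by-parts argument for the converse in more detail than the paper does.
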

\begin{proof}
The result of the Lemma is given in \cite[Remark 2.1]{girault1990curl}, but we prove it here for completeness.  For a given $\bv\in\widetilde{\bH}(\rot)$, let $\bw=\rot\bv$ in Lemma~\ref{LGirault} and note that $\rot(\bu-\bv)={\bf 0}$ implies that $\bu-\bv=\nabla q$ for some $q\in H^1(\Omega)$.  The reverse
implication, i.e. $\bu+\nabla q\in \widetilde{\bH}(\rot)$ for all $q\in H^1(\Omega)$, $\bu\in \bH^1(\Omega)$ satisfying the
conditions in \eqref{decomp},   is also straightforward since $\bu\times\bn={\bf 0}~~\text{on}~\dO$ implies $\bn\cdot\rot\bu=0$ on $\dO$.
\end{proof}

We also define the subspace of $\widetilde{\bH}(\rot)$ consisting of divergence free functions with zero normal component on $\dO$ as
\[
\widetilde{\bH}^0(\rot):=\{\bv\in\widetilde{\bH}(\rot)\,:\,(\bv,\nabla q)=0~~\forall~q\in H^1(\Omega)\}.
\]
Notice that if $\bv \in \widetilde{\bH}^0(\rot)$, we have in particular that $(\bv,\nabla q)=0~\forall~q\in \dot{C}(\Omega)$, where $\dot{C}(\Omega)$ denotes the set of $C^{\infty}(\Omega)$ functions with compact support. Hence $\div\bv=0$ in $\dot{C}(\Omega)'$ and so, owing to a density argument, $\div\bv=0$ in $L^2(\Omega)$.
Therefore, $\bv\in\widetilde{\bH}^0(\rot)$ implies $\bv\in \bH(\div)$ and the condition $\bn\cdot\bv=0$ holds in $H^{-\frac12}(\dO)$, and so we conclude $\bn\cdot\bv=0$ a.e. on $\dO$. Therefore, for $\bv\in\widetilde{\bH}^0(\rot)$, we get due to the regularity of $\dO$ that $\bv\in \bH^1(\Omega)$, and because of the boundary condition we have $\bv\in\bW$.
Summarizing, we have shown that
\begin{equation}\label{emb}
\widetilde{\bH}^0(\rot)\subset\{\bv\in\bW\,:\,\div\bv=0\}.
\end{equation}

Owing to the fact that we are assuming $\dO \in C^3$ we have the following Helmholtz-Weyl decomposition of $\bu\in\bL^r(\Omega)$, $r\in(1,\infty)$, see \cite[Theorems III.1.2 and III.2.3]{galdi2011introduction}:
\begin{equation}
\label{HW}
  \bu=\nabla q+\bpsi,
\end{equation}
where $q \in W^{1,r}(\Omega)/\R$ is unique,
\[
  \bpsi\in \bH_r^0(\div):=\{\bpsi\in\bL^r(\Omega) \,:\,\div\bpsi=0,~\bpsi\cdot\bn|_{\dO}=0\},
\]
and $\|\bpsi\|_{\bL^r}+\|\nabla q\|_{\bL^r}\le c_0\|\bu\|_{\bL^r}$.
The existence of the Helmholtz-Weyl decomposition \eqref{HW} implies the well-posedness of the following problem \cite{galdi2011introduction}:
Given $\bg\in\bL^r(\Omega)$ with $r \in (1,\infty)$, find $\psi\in W^{1,r}(\Omega)/\mathbb{R}$ such that
\begin{equation}
\label{P_Pois}
(\nabla \psi,\nabla \phi)=(\bg,\nabla \phi)\quad \forall~\phi\in  W^{1,t}(\Omega),\quad \frac1r + \frac1t = 1. 
\end{equation}
The space $\bH_r^0(\div)$ from the Helmholtz-Weyl decomposition admits the following characterization for simply connected domains $\Omega$ for which $\dO\in C^2$, see \cite[Corollary 2.3]{borchers1990equations}.

\begin{lemma}[characterization of $\bH_r^0(\div)$]
\label{lem:HW1}
Let $\Omega$ be bounded, simply connected with $\dO \in C^2$ and connected. If $\bu\in\bH_r^0(\div)$ with $r \in (1,\infty)$, then there is a unique $\bv \in \bW^{1,r}_0(\Omega)$ such that
\begin{equation}
\label{HW1}
  \bu = \rot \bv, \quad
  \Delta\div \bv = 0 ~ \text{in} ~\Omega,
\end{equation}
with the estimate $ \| \bv \|_{\bW^{1,r}} \leq c \| \bu \|_{\bL^r}$.
\end{lemma}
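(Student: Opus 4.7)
The plan is to construct $\bv$ by correcting a preliminary vector potential by a gradient, and to transfer the condition $\Delta\div\bv=0$ into a biharmonic problem. First, I would produce a vector potential $\bw \in \bW^{1,r}(\Omega)$ satisfying
\[
\rot\bw=\bu,\quad \div\bw=0\text{ in }\Omega,\quad \bw\times\bn=0\text{ on }\dO,\quad \|\bw\|_{\bW^{1,r}}\le c\|\bu\|_{\bL^r}.
\]
For $r=2$ this is exactly Lemma~\ref{LGirault}; for general $r\in(1,\infty)$ one needs the $L^r$-analog, which holds on simply connected domains with $\dO\in C^2$ by classical $L^r$-theory for elliptic systems. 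The desired $\bv$ will then be sought in the form $\bv=\bw+\nabla\phi$, so that the condition $\rot\bv=\bu$ holds automatically.

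The remaining requirements, namely $\bv|_{\dO}=0$ and $\Delta\div\bv=0$, reduce to a biharmonic problem for $\phi$. Since $\bw\times\bn=0$, the trace $\bw|_{\dO}$ is purely normal, so the constraint $\nabla\phi|_{\dO}=-\bw|_{\dO}$ decouples into $\nabla_\tau\phi|_{\dO}=0$ and $\partial_n\phi|_{\dO}=-\bw\cdot\bn$. Because $\dO$ is connected, the first of these lets us fix $\phi|_{\dO}=0$. Using $\div\bv=\div\bw+\Delta\phi=\Delta\phi$, the condition $\Delta\div\bv=0$ becomes $\Delta^2\phi=0$. Altogether, $\phi$ solves
\[
\Delta^2\phi=0\text{ in }\Omega,\qquad \phi=0,\ \partial_n\phi=-\bw\cdot\bn\text{ on }\dO,
\]
which is a well-posed biharmonic Dirichlet problem. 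Standard $W^{2,r}$ regularity, valid under $\dO\in C^2$, yields $\|\phi\|_{W^{2,r}}\le c\|\bw\cdot\bn\|_{W^{1-1/r,r}(\dO)}\le c\|\bw\|_{\bW^{1,r}}$. Setting $\bv=\bw+\nabla\phi$ then produces a function in $\bW^{1,r}_0(\Omega)$ with all required properties, and the estimate $\|\bv\|_{\bW^{1,r}}\le c\|\bu\|_{\bL^r}$ follows by composition.

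Uniqueness is then quick: if $\bv_1,\bv_2$ both qualify, $\bchi=\bv_1-\bv_2\in\bW^{1,r}_0(\Omega)$ satisfies $\rot\bchi=0$ and $\Delta\div\bchi=0$. Since $\Omega$ is simply connected, $\bchi=\nabla\psi$ for some $\psi\in W^{2,r}(\Omega)$. The boundary condition $\bchi|_{\dO}=0$ forces $\nabla\psi|_{\dO}=0$, so $\psi$ is constant on $\dO$ (take $0$) and $\partial_n\psi|_{\dO}=0$. Together with $\Delta^2\psi=\Delta\div\bchi=0$, uniqueness for the clamped biharmonic problem yields $\psi\equiv 0$, hence $\bchi=0$.

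I expect the principal obstacle to be the first step: extending Lemma~\ref{LGirault} to $r\neq 2$ with the correct boundary condition $\bw\times\bn=0$ and the $\bW^{1,r}$ estimate. The $r=2$ argument rests on scalar and vector Poisson problems in a Hilbert setting, whereas the $L^r$ version requires $L^r$-elliptic regularity for div-curl systems on smooth simply connected domains (this is where the reference \cite{borchers1990equations} does the heavy lifting). The $W^{2,r}$ regularity of the biharmonic problem used in the second step is standard, but similarly relies on $\dO\in C^2$ and non-Hilbert elliptic theory.
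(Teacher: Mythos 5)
The paper does not actually prove this lemma: it is quoted directly from \cite[Corollary 2.3]{borchers1990equations}, so there is no in-paper argument to measure yours against. Judged on its own, your architecture is the natural one and is sound in outline. Writing $\bv=\bw+\nabla\phi$ with $\rot\bw=\bu$, $\div\bw=0$ and $\bw\times\bn={\bf 0}$ correctly reduces the boundary condition $\bv|_{\dO}={\bf 0}$ to $\phi|_{\dO}=\text{const}$ (using connectedness of $\dO$) together with $\partial_n\phi=-\bw\cdot\bn$, and turns the side condition $\Delta\div\bv=0$ into the clamped biharmonic problem $\Delta^2\phi=0$; this also explains why the otherwise odd-looking normalization in \eqref{HW1} appears --- it is exactly what pins down the gradient part once $\rot\bv$ and $\bv|_{\dO}$ are prescribed. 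The uniqueness argument via $\bchi=\nabla\psi$ and the homogeneous clamped problem is correct as well (for $r\ge 2$ by the identity $\|\Delta\psi\|^2=0$; for $r<2$ one needs the $L^r$ duality theory for $\Delta^2$).

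That said, the proof is not self-contained, and the pieces you defer are exactly the hard ones. Your step 1 --- a vector potential in $\bW^{1,r}(\Omega)$ with $\bw\times\bn={\bf 0}$ and the bound $\|\bw\|_{\bW^{1,r}}\le c\|\bu\|_{\bL^r}$ --- does not follow from Lemma \ref{LGirault} by any soft extension; for $r\ne 2$ it is a theorem of the same depth as the one being proved, and it lives in the very paper \cite{borchers1990equations} (or in von Wahl's and Amrouche--Seloula's work) that the authors cite for the full statement, so in effect you have traded one citation to Borchers--Sohr for another. Second, $W^{2,r}$ solvability of the biharmonic Dirichlet problem for every $r\in(1,\infty)$ is a below-natural-order estimate for a fourth-order operator, obtained by transposition; it is not ``standard'' under $\dO\in C^2$ (for fourth-order problems on low-regularity boundaries the admissible range of $r$ is genuinely restricted). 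Under the paper's standing hypothesis $\dO\in C^3$ it can be justified, but it needs a reference rather than an appeal to standardness. Neither issue invalidates the plan, but both must be discharged before this counts as a proof.
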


This characterization yields the following result.

\begin{cor}[gradient estimate]
\label{LRotEst}
In the setting of Lemma \ref{lem:HW1}, if $\bu\in\bH_r^0(\div)$ is such that $\rot\bu\in\bH_r^0(\div)$, then $\bu\in \bW^{1,r}(\Omega)$ and
$\|\nabla \bu\|_{\bL^r}\le c\,\|\rot\bu\|_{\bL^r}$.
\end{cor}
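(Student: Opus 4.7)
The plan is to use Lemma \ref{lem:HW1} applied to the datum $\rot\bu \in \bH_r^0(\div)$ to produce a vector field of the same curl but with the desired $\bW^{1,r}$-control, and then to recover $\bu$ from it via a scalar potential whose regularity we bound through an elliptic Neumann problem.

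First, I would apply Lemma \ref{lem:HW1} to $\rot\bu$ (which belongs to $\bH_r^0(\div)$ by hypothesis). This yields a unique $\bv \in \bW^{1,r}_0(\Omega)$ with $\rot\bv = \rot\bu$ in $\Omega$, together with the estimate $\|\bv\|_{\bW^{1,r}} \le c\,\|\rot\bu\|_{\bL^r}$. Next, since $\rot(\bu-\bv)=0$ and $\Omega$ is simply connected, there exists $q \in W^{1,r}(\Omega)/\R$ with
\[
  \bu = \bv + \nabla q \quad \text{in } \Omega.
\]

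The next step is to identify the problem satisfied by $q$. Because $\bu \in \bH_r^0(\div)$ we have $\div\bu = 0$, so $\Delta q = -\div\bv$ in $\Omega$. Moreover, $\bv \in \bW^{1,r}_0$ gives $\bv|_{\dO} = 0$ and, together with $\bu\cdot\bn|_{\dO}=0$, this yields the Neumann boundary condition $\partial_\bn q = 0$ on $\dO$. The compatibility condition $\int_\Omega \div\bv = \int_{\dO} \bv\cdot\bn = 0$ is satisfied, so by standard $W^{2,r}$-regularity for the Neumann Laplacian on the $C^3$ (hence $C^{1,1}$) domain $\Omega$ we obtain
\[
  \|\nabla q\|_{\bW^{1,r}} \le c\,\|\div\bv\|_{L^r} \le c\,\|\bv\|_{\bW^{1,r}} \le c\,\|\rot\bu\|_{\bL^r}.
\]

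Combining the two contributions, $\nabla\bu = \nabla\bv + \nabla^2 q \in \bL^r(\Omega)$ with
\[
  \|\nabla\bu\|_{\bL^r} \le \|\nabla\bv\|_{\bL^r} + \|\nabla^2 q\|_{\bL^r} \le c\,\|\rot\bu\|_{\bL^r},
\]
which is the asserted bound. I expect the only potentially subtle point to be invoking the $L^r$-Neumann regularity; this is standard under the $C^3$ boundary assumption made at the outset of Section \ref{sec:prelim}, but one should make this explicit. Everything else is a clean application of the already-recorded Helmholtz--Weyl machinery.
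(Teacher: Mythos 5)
Your argument is correct and is essentially the paper's own proof: both apply Lemma \ref{lem:HW1} to $\rot\bu$ to produce $\bv\in\bW^{1,r}_0(\Omega)$ with the same curl, write $\bu-\bv$ as a gradient, and bound the scalar potential via $W^{2,r}$ regularity for the homogeneous Neumann problem $\Delta q=-\div\bv$ (the paper cites Grisvard, Theorem 2.4.2.7, for exactly this step). Your explicit verification of the Neumann boundary condition and of the compatibility condition $\int_\Omega\div\bv=0$ is a welcome bit of extra care, but the route is the same.
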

\begin{proof} Note that $\rot\bu\in\bH_r^0(\div)$ implies $\rot\bu=\rot\bv$ with $\bv$ as in \eqref{HW1}, satisfying
$\|\nabla \bv\|_{\bL^r}\le c\,\|\rot\bu\|_{\bL^r}$.

Define $\be=\bu-\bv$, and notice that we have $\div\be=\div\bv$, $\be\cdot\bn=0$ on $\dO$, and $\rot\be={\bf 0}$. This, in particular, implies that $\be=\nabla\psi$. Therefore, the function $\psi$ solves the Neumann problem $\Delta\psi=-\div\bv$ in $\Omega$ with $\bn\cdot\nabla\psi=0$ on $\dO$. Using a regularity result for this problem, see \cite[Theorem 2.4.2.7]{grisvard2011elliptic}, we get
\[
\|\nabla \bu\|_{\bL^r}\le \|\nabla \bv\|_{\bL^r}+\|\nabla^2\psi\|_{\bL^r}\le c\,(\|\rot\bu\|_{\bL^r}+\|\div\bv\|_{\bL^r})
\le c\,\|\rot\bu\|_{\bL^r},
\]
where we used that $\|\nabla \bv\|_{\bL^r}\le c\,\|\rot\bu\|_{\bL^r}$.
\end{proof}

\subsection{The Stokes problem with nonstandard boundary conditions}
\label{sub:Stokes}

Consider first the classical Stokes problem, supplemented with no-slip boundary conditions:
\begin{equation}
\label{byhand3}
  \begin{aligned}
  -\nu \Delta \bu + \nabla p&={\bf F} \quad \text{in }\Omega,\\
  \div \bu &= 0 \quad \text{in } \Omega,\\
  \bu &= {\bf 0} \quad \text{on } \partial \Omega.\\
  \end{aligned}
\end{equation}
Recall the following regularity result, \cite[Lemma IV.6.1]{galdi2011introduction}, if $\partial \Omega \in C^2$ and ${\bf F} \in \bL^{r}{(\Omega)}$ with $r\in (1, \infty)$, then the solution to \eqref{byhand3} satisfies $\bu \in \bW^{2, r}(\Omega)$, $p \in W^{1, r}(\Omega)/\R$, and
\begin{equation}
\label{eq:CZStokes}
  \nu \| \bu \|_{\bW^{2, r}} + \|  {p} \|_{W^{1, r}/\R}  \leq c \| {\bf F} \|_{\bL^r}.
\end{equation}

Our analysis in section \ref{sec:stationary} will also require regularity results for the following non-standard Stokes-type problem,
\begin{equation}\label{Stokes_non}
\begin{aligned}
  \alpha\bu-\nu \Delta\bu + \ba\times\rot\bu +\nabla p&= \bg \quad \text{in } \Omega,\\
  \div\bu&=0 \quad \text{in } \Omega,\\
  \bn\cdot\rot\rot\bu=\bn\cdot\rot\bu=\bu\cdot\bn &= {0}\quad \text{on } \partial\Omega,
\end{aligned}
\end{equation}
with $\ba\in\bX$ and $\div\ba=0$.
The problem with $\ba={\bf 0}$, $\alpha=0$ was discussed in~\cite{girault1990curl}.

We will now establish the necessary well-posedness and regularity for \eqref{Stokes_non}. A weak formulation reads: For $\bg\in \bL^2(\Omega)$ find $(\bu,p)\in\widetilde{\bH}(\rot)\cap\bL^3(\Omega)\times W^{1,\frac32}(\Omega)/\mathbb{R}$ satisfying
\begin{equation}\label{weak_non}
  \begin{aligned}
    \alpha(\bu,\bpsi)+ \nu (\rot\bu,\rot\bpsi) +(\ba\times\rot\bu,\bpsi) +(\nabla p,\bpsi)&= (\bg,\bpsi), \\
    (\nabla q,\bu)&=0,
  \end{aligned}
\end{equation}
for all $(\bpsi,q)\in\widetilde{\bH}^{}(\rot)\cap\bL^3(\Omega)\times W^{1,\frac32}(\Omega)$.

In light of the decomposition given in Lemma \ref{L_decomp} the variables $\bu$ and $p$ can be decoupled and we can consider, instead, the following weak formulation: find $(\bu,p)\in\widetilde{\bH}^0(\rot)\times W^{1,\frac32}(\Omega)/\mathbb{R}$ that solve
\begin{equation}\label{weak_non2}
  \begin{aligned}
    \alpha(\bu,\bpsi)+ \nu (\rot\bu,\rot\bpsi) +(\ba\times\rot\bu,\bpsi)&= (\bg,\bpsi),\quad \forall~\bpsi\in\widetilde{\bH}^0(\rot),\\
    (\nabla p,\nabla q)&=(\bg-\ba\times\rot\bu-\alpha\bu,\nabla q),\quad \forall~q\in W^{1,3}(\Omega).
  \end{aligned}
\end{equation}
To see this it suffices to set, in \eqref{weak_non}, $\bpsi = \bv \in \widetilde{\bH}^0(\rot)$ and $\bpsi = \nabla q$ with $q \in W^{1,3}(\Omega)$ and recall that \eqref{emb} implies that $\widetilde{\bH}^0(\rot)\subset\bL^3(\Omega)$.


{We will now prove} the well-posedness of \eqref{weak_non} and \eqref{weak_non2}. {This is done as an auxiliary step towards showing that}
\eqref{VVS} is well-posed for $\alpha$ large enough (if the other data is fixed). {For this reason}, we need to make sure that certain constants in our estimates are independent  of $\alpha\ge0$. Since this includes
two extreme cases: $\alpha=0$ and $\alpha\to+\infty$,  it is helpful to introduce the following parameter,
\begin{equation}
\label{eq:defofalphaplus}
  \alpha_+=\max\{\alpha,C_P^{-2}\nu\},
\end{equation}
where $C_P$ is the optimal constant in the Poincar\'e inequality $\|\bv\|\le C_P\|\nabla\bv\|$, $\bv\in\bW$. We introduce it in the definition of $\alpha_+$ {so that it has} the proper (physical) scaling. The definition of $\alpha_+$ allows {us to obtain} the following simple bound:
\begin{equation}\label{alpha_b}
|(\bg,\bv)|\le \|\bg\|\min\{\|\bv\|,C_P\|\nabla\bv\|\}\le \sqrt{2}\alpha_+^{-\frac12}\|\bg\|\|\bv\|_\ast\quad\forall\bg\in\bL^2,~\bv\in\bW,
\end{equation}
with
\begin{equation}
\label{eq:defofastnorm}
  \|\bv\|_\ast:=(\alpha\|\bv\|^2+\nu\|\nabla\bv\|^2)^{\frac12}.
\end{equation}
To check the last inequality in \eqref{alpha_b} it is helpful to note the trivial bound $\min\{a,b\}\max\{c,d\}\le
ac+bd\le\sqrt{2}((ac)^2+(bd)^2)^{\frac12}$, for non-negative reals $a,b,c,d$. {Notice, in addition, that by} considering two cases $\alpha\le C_P^{-2}\nu$ and $\alpha> C_P^{-2}\nu$ and also using Poincare's or Young's inequality, {we obtain that} 
\begin{equation}\label{alpha_b2}
\|\bv\|\|\nabla\bv\|\le  \alpha_+^{-\frac12}\nu^{-\frac12}\|\bv\|_\ast^2\quad\forall~\bv\in\bW.
\end{equation}
These two inequalities will {help} us to obtain conditions to guarantee that problems \eqref{weak_non} and \eqref{weak_non2} are well posed.

\begin{lemma}[{conditional} well-posedness]
\label{Lreg}
There is a constant $C^*$ which depends only on $\Omega$ such that, if
\begin{equation}
  \label{eq:exisence_condition}
  \|\nabla\ba\|\leq C^* \nu^{3/4} \alpha_+^{1/4}
\end{equation}
then problems \eqref{weak_non} and \eqref{weak_non2} are well-posed and  have the same unique solution.
\end{lemma}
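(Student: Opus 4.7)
The plan is to exploit the decoupling already indicated between \eqref{weak_non} and \eqref{weak_non2}. By Lemma~\ref{L_decomp}, every test function $\bpsi\in\widetilde{\bH}(\rot)$ admits a decomposition $\bpsi=\bv+\nabla q$ with $\bv\in\widetilde{\bH}^0(\rot)$ and $q\in H^1(\Omega)$; since $\rot(\nabla q)={\bf 0}$, the velocity and pressure equations decouple. The equivalence of the two formulations then follows from a routine substitution: the constraint $(\nabla q,\bu)=0$ for all $q$ forces $\bu\in\widetilde{\bH}^0(\rot)$, while testing \eqref{weak_non2} with $\bv$ and $\nabla q$ separately reproduces \eqref{weak_non}. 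Hence it suffices to analyze the decoupled system.

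The heart of the argument is Lax--Milgram applied to the velocity equation
$$a(\bu,\bpsi):=\alpha(\bu,\bpsi)+\nu(\rot\bu,\rot\bpsi)+(\ba\times\rot\bu,\bpsi)=(\bg,\bpsi)$$
on the Hilbert space $\widetilde{\bH}^0(\rot)$, which by \eqref{emb}, \eqref{equiv} and Poincar\'e in $\bW$ is continuously embedded in $\bH^1(\Omega)$ and on which $\|\cdot\|_\ast$ from \eqref{eq:defofastnorm} is an equivalent norm. Continuity of $a$ is routine using H\"older and $\bH^1\hookrightarrow \bL^6$. Coercivity is the crux and requires careful control of the trilinear term: by H\"older with exponents $(6,2,3)$, the Sobolev bound $\|\ba\|_{\bL^6}\le c\|\nabla\ba\|$, the Gagliardo--Nirenberg interpolation $\|\bu\|_{\bL^3}\le c\|\bu\|^{1/2}\|\nabla\bu\|^{1/2}$, the trivial bound $\nu^{1/2}\|\rot\bu\|\le\|\bu\|_\ast$ (since $\|\rot\bu\|\le\|\nabla\bu\|$), and \eqref{alpha_b2}, I expect
$$|(\ba\times\rot\bu,\bu)|\le c\|\nabla\ba\|\,\nu^{-3/4}\alpha_+^{-1/4}\|\bu\|_\ast^2.$$
Testing the full bilinear form with $\bpsi=\bu$ then yields $a(\bu,\bu)\ge\bigl(1-c\|\nabla\ba\|\nu^{-3/4}\alpha_+^{-1/4}\bigr)\|\bu\|_\ast^2$, so coercivity holds as soon as \eqref{eq:exisence_condition} is satisfied with $C^\ast=1/(2c)$. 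The right-hand side $(\bg,\cdot)$ is a bounded linear functional on $\widetilde{\bH}^0(\rot)$ by \eqref{alpha_b}, so Lax--Milgram provides a unique $\bu\in\widetilde{\bH}^0(\rot)$.

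Once $\bu$ is in hand, the second equation in \eqref{weak_non2} is the Neumann-type problem \eqref{P_Pois} with datum $\bg-\ba\times\rot\bu-\alpha\bu$, which lies in $\bL^{3/2}(\Omega)$: H\"older gives $\|\ba\times\rot\bu\|_{\bL^{3/2}}\le\|\ba\|_{\bL^6}\|\rot\bu\|$, and $\bg,\,\alpha\bu\in\bL^2\hookrightarrow\bL^{3/2}$. Invoking \eqref{P_Pois} with $r=3/2$ and $t=3$ produces a unique $p\in W^{1,3/2}(\Omega)/\R$, completing the solution of \eqref{weak_non2}; the equivalence established in the first paragraph promotes $(\bu,p)$ to the unique solution of \eqref{weak_non}.

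The main obstacle will be the coercivity step, where the smallness condition \eqref{eq:exisence_condition} must be derived uniformly in $\alpha\ge 0$, covering simultaneously the two extremes $\alpha=0$ and $\alpha\to\infty$. This is precisely the role played by the auxiliary parameter $\alpha_+$ and the $\|\cdot\|_\ast$ norm introduced in \eqref{eq:defofalphaplus}--\eqref{alpha_b2}: they allow the $L^2$-piece and $H^1$-piece of $\bu$ produced by the $\bL^3$-interpolation to be reassembled into a single quadratic form in $\|\bu\|_\ast$, ultimately yielding the sharp scaling $\|\nabla\ba\|\lesssim\nu^{3/4}\alpha_+^{1/4}$.
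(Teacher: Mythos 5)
Your proposal is essentially correct and follows the same route as the paper: decouple velocity from pressure, prove coercivity of the velocity form on $\widetilde{\bH}^0(\rot)$ in the $\|\cdot\|_\ast$ norm under the smallness condition (your single-inequality estimate $|(\ba\times\rot\bu,\bu)|\le c\|\nabla\ba\|\nu^{-3/4}\alpha_+^{-1/4}\|\bu\|_\ast^2$, obtained from \eqref{eq:rhsbdd}, \eqref{aux33} and \eqref{alpha_b2}, is a clean compression of the paper's two-case argument and gives the same scaling), and then recover the pressure from \eqref{P_Pois} with $r=3/2$, $t=3$. The one genuine slip is the justification of the test-function decomposition: Lemma~\ref{L_decomp} produces $\bpsi=\bv+\nabla q$ with $\div\bv=0$ and $\bv\times\bn={\bf 0}$, i.e.\ vanishing \emph{tangential} trace, so that $\bv$ is in general \emph{not} in $\widetilde{\bH}^0(\rot)$ and the crucial cancellation $(\nabla p,\bv)=0$ would fail. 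The decomposition you actually need is the Helmholtz--Weyl decomposition \eqref{HW} of $\bL^3(\Omega)$: writing $\bpsi=\bv+\nabla q$ with $\bv\in\bH^0_3(\div)$ and $q\in W^{1,3}(\Omega)$ (not merely $H^1$), and noting $\rot\bv=\rot\bpsi$, one checks $\bv\in\widetilde{\bH}^0(\rot)$, and the decoupling goes through. This is a repairable misattribution rather than a fatal flaw. Finally, note that the paper does not transfer the solution of \eqref{weak_non2} back to \eqref{weak_non} as you do; it establishes well-posedness of \eqref{weak_non} directly via saddle-point theory, verifying the inf-sup condition for the pressure form from the Helmholtz--Weyl decomposition of $\bL^3$. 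Your transfer argument yields existence, uniqueness and the a priori bounds \eqref{u_alpha}--\eqref{p_alpha} just as well, while the paper's route additionally exhibits the isomorphism structure of the mixed problem; either is acceptable here.
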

\begin{proof}
We begin the proof by noting that all bilinear forms in \eqref{weak_non} and \eqref{weak_non2} are well-defined and continuous on the corresponding spaces. In particular, we have
\begin{equation}
\label{eq:rhsbdd}
|(\ba\times\rot\bu,\bpsi)|\le \|\ba\|_{\bL^6}\|\rot\bu\|\|\bpsi\|_{\bL^3}\le C_3\|\nabla\ba\| \|\rot\bu\|\|\bpsi\|_{\bL^3}.
\end{equation}

The Gagliardo--Nirenberg interpolation inequality \cite[Theorem 4.17]{Ada75} provides
\begin{equation}
\label{aux33}
  \| \bu \|_{\bL^3} \leq \tilde C_5 \| \bu\|^{1/2} \| \nabla \bu \|^{1/2} \leq C_5 \| \bu\|^{1/2} \| \rot \bu \|^{1/2},\quad {\bu \in \widetilde{\bH}^0(\rot)},
\end{equation}
where in the last step we applied \eqref{equiv}. 
{Now \eqref{eq:rhsbdd}}--\eqref{aux33} yields the bound for $\bu \in \widetilde{\bH}^0(\rot)$,
\begin{equation}
  |(\ba\times\rot\bu,\bu)| 
  \leq C \| \nabla \ba \| \| \rot \bu \|^{3/2} \| \bu \|^{1/2}  \le \frac{C}{\alpha^{1/3}} \| \nabla \ba \|^{4/3} \| \rot \bu \|^2 + \frac\alpha2 \| \bu \|^2,
\label{aux34}
\end{equation}
where the constant $C$ depends only on $\Omega$. In view of {estimate \eqref{eq:rhsbdd} for $0 \leq \alpha < \alpha_+$ and \eqref{aux34} for $\alpha = \alpha_+$}, respectively, we see that there is a constant $C^*=C^*(\Omega)$ such that, whenever \eqref{eq:exisence_condition} holds, we have, for every $\bu\in\widetilde{\bH}^0(\rot)$, that
\begin{multline}\label{aux5}
\alpha(\bu,\bu)+\nu (\rot\bu,\rot\bu) +(\ba\times\rot\bu,\bu)\ge \frac\alpha2 \|\bu \|^2 + \frac\nu 2\|\rot\bu\|^2 \\
\ge  \frac\alpha2 \|\bu \|^2 + c\nu(\|\rot\bu\|^2+\|\bu\|_{\bL^3}^2),
\end{multline}
where the constant $c>0$ depends only on $\Omega$.
With these estimates at hand, we can now show the well posedness of each problem.

We begin with \eqref{weak_non2} since it is simpler. Notice that, in light of \eqref{aux5} and the Banach--Ne{\v c}as--Babu{\v s}ka theorem {\cite[Theorem 2.6]{ern2013theory}}, the $\bu$-problem is well posed. Now since $\bu \in \widetilde{\bH}^0(\rot)$ is uniquely defined, estimate \eqref{eq:rhsbdd} together with the well posedness of \eqref{P_Pois} show that the $p$-problem  is well-defined as well. In addition, {estimates \eqref{alpha_b} and \eqref{aux5}} yield
\begin{equation}\label{u_alpha}
c\|\bu \|^2_\ast\le |(\bg,\bu)|\le \sqrt{2}\alpha_+^{-\frac12}\|\bg\|\|\bu\|_\ast~~\Rightarrow~~\alpha\|\bu \| + \alpha_+^{\frac12}\nu^{\frac12}\|\nabla\bu \|\le C\, \|\bg\|.
\end{equation}
From the second equation in \eqref{weak_non2}, \eqref{eq:rhsbdd}   and  \eqref{u_alpha} we  conclude {that, if \eqref{eq:exisence_condition} holds, we have}
\begin{equation}\label{p_alpha}
  \begin{aligned}
    \|\nabla p \|_{L^{\frac32}} &\le \|\bg\|_{L^{\frac32}}+\alpha\|\bu\|_{L^{\frac32}}+ \|\ba\times\rot\bu\|_{L^{\frac32}} \\ &\le
    c(\|\bg\|+ \|\nabla\ba\| \|\nabla\bu\|)\le c(\|\bg\|+ \|\nabla\ba\|\nu^{-\frac12}\alpha_+^{-\frac12} \|\bg\|)\le c\|\bg\|,
  \end{aligned}
\end{equation}
{where, for the last inequality}, we used that $\nu^{-\frac12}\alpha_+^{-\frac12}\le c\,\nu^{-\frac34}\alpha_+^{-\frac14}$.

We now proceed with \eqref{weak_non}. For that we begin by noticing that both spaces $\widetilde{\bH}(\rot)\cap\bL^3(\Omega)$ and  $W^{1,\frac32}(\Omega)/\mathbb{R}$ are Banach and reflexive. Hence the well-posedness of \eqref{weak_non}  follows from the theory of saddle point problems as detailed, for instance, in \cite[Theorem 2.34]{ern2013theory}. Indeed, inequality \eqref{aux5} gives the inf-sup property and nondegeneracy of the $\bu$-form over the kernel of the $p$-form, which happens to coincide with $\widetilde{\bH}^0(\rot)\cap \bL^3(\Omega)$. On the other hand, the decomposition \eqref{decomp} and the Helmholtz-Weyl decomposition of $\bL^3(\Omega)$ yield the inf-sup property of the $p$-form:
\[
\begin{split}
  \sup_{\bpsi\in\widetilde{\bH}(\rot)\cap\bL^3(\Omega)}\frac{(\nabla p,\bpsi)}{\|\rot\bpsi\|+\|\bpsi\|_{\bL^3}}&\ge
  \sup_{q\in  W^{1,3}(\Omega)}\frac{(\nabla p,\nabla q)}{\|\nabla q\|_{\bL^3}}\ge
  c \sup_{\bv\in \bL^3(\Omega)}\frac{(\nabla p,\bv)}{\|\bv\|_{\bL^3}}=
  c\|\nabla p\|_{\bL^{\frac32}}\\ & \ge c\| p\|_{W^{1,\frac32}}\quad \forall~p\in W^{1,\frac32}(\Omega)/\mathbb{R}.
\end{split}
\]

It remains to show that the solutions coincide, but this is immediate upon choosing appropriate test functions.
\end{proof}

\begin{remark}[large $\alpha$]
Notice that, for any given viscosity $\nu$ and vector $\ba \in \bX$ with $\div \ba = 0$, there exists $\alpha$ large enough so that condition \eqref{eq:exisence_condition} is satisfied.  \end{remark}

We  now establish a \emph{regularity} result for the weak solution of \eqref{weak_non} and \eqref{weak_non2}. Although relatively straightforward, such a result is not found in  \cite{girault1990curl} nor seemingly anywhere else in the literature.

\begin{lemma}[regularity]
\label{Lreg1}
Assume {that} $\Omega$ is simply connected and $\dO\in C^3$. {If \eqref{eq:exisence_condition} holds, then} the solution to \eqref{weak_non} satisfies $(\bu, p)\in \bH^2(\Omega)\times H^1(\Omega)/\R$ and ${\nu} \|\bu\|_{\bH^2}+\|\nabla p\|\le C\,\|\bg\|$, {with $C=C(\Omega)$ independent of $\alpha$ and $\nu$}.
\end{lemma}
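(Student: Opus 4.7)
The plan is to combine the div--curl machinery of Section~\ref{sub:divcurl} with a bootstrap that hinges on the non-standard boundary condition $\bn\cdot\rot\rot\bu=0$: the $\bH^2$ regularity of $\bu$ will be reduced to an $\bL^2$ bound on $\rot\rot\bu$, which is then extracted directly from the equation.

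First, since $\bu\in\widetilde{\bH}^0(\rot)\subset\bW$ with $\div\bu=0$ by \eqref{emb}, inequality \eqref{DivRot2} reduces the $\bH^2$ claim to proving $\nu\|\rot\bu\|_{\bH^1}\le C\|\bg\|$. Setting $\bw=\rot\bu$, the conditions $\div\bw=0$, $\bn\cdot\bw=0$ (a boundary condition of \eqref{Stokes_non}), $\div\rot\bw=0$ (automatic), and $\bn\cdot\rot\bw=\bn\cdot\rot\rot\bu=0$ (the non-standard boundary condition) place both $\bw$ and $\rot\bw$ in $\bH_2^0(\div)$, so Corollary~\ref{LRotEst} with $r=2$ yields
\[
\|\nabla\rot\bu\|\le C\|\rot\rot\bu\|.
\]
Hence everything reduces to bounding $\nu\|\rot\rot\bu\|$ in $\bL^2$.

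Next, I would rewrite the momentum equation as $\nu\rot\rot\bu=\bg-\alpha\bu-\ba\times\rot\bu-\nabla p$ (using $-\Delta=\rot\rot$ on divergence-free fields) and control each term on the right in $\bL^2$. For the nonlinearity, $\bH^1_0\hookrightarrow\bL^6$ combined with the Gagliardo--Nirenberg interpolation $\|\rot\bu\|_{\bL^3}\le C\|\rot\bu\|^{1/2}\|\rot\bu\|_{\bH^1}^{1/2}$ and the previous display give
\[
\|\ba\times\rot\bu\|\le C\|\nabla\ba\|\,\|\rot\bu\|^{1/2}\|\rot\rot\bu\|^{1/2}.
\]
For the pressure, the second equation in \eqref{weak_non2} reads $(\nabla p,\nabla q)=(\bg-\alpha\bu-\ba\times\rot\bu,\nabla q)$ for all $q\in W^{1,3}(\Omega)$; since $\bu\in\widetilde{\bH}^0(\rot)$ forces $(\bu,\nabla q)=0$, this simplifies to $(\nabla p,\nabla q)=(\bg-\ba\times\rot\bu,\nabla q)$, and the $L^2$-theory of \eqref{P_Pois} delivers $\|\nabla p\|\le C(\|\bg\|+\|\ba\times\rot\bu\|)$.

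Combining these bounds and applying Young's inequality to the mixed term absorbs $\tfrac{\nu}{2}\|\rot\rot\bu\|$ on the left and leaves a remainder proportional to $\nu^{-1}\|\nabla\ba\|^2\|\rot\bu\|$. Invoking the a priori estimates \eqref{u_alpha}, namely $\alpha\|\bu\|\le C\|\bg\|$ and $\|\rot\bu\|\le C\|\bg\|/(\alpha_+\nu)^{1/2}$, this remainder is at most $C\nu^{-3/2}\alpha_+^{-1/2}\|\nabla\ba\|^2\|\bg\|$, which is bounded by $C\|\bg\|$ precisely under the smallness hypothesis \eqref{eq:exisence_condition}. Back-substitution then yields $\|\nabla p\|\le C\|\bg\|$, and via \eqref{DivRot2} and the first reduction, $\nu\|\bu\|_{\bH^2}\le C\|\bg\|$ with $C=C(\Omega)$ independent of $\alpha$ and $\nu$. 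The main obstacle is closing this bootstrap rigorously: starting from $\bu\in\bH^1$ and $\nabla p\in L^{3/2}$ (cf.~\eqref{p_alpha}), the desired $\bL^2$ bounds on $\rot\rot\bu$, $\ba\times\rot\bu$ and $\nabla p$ are coupled circularly, and unwinding them requires either a preliminary $L^{3/2}\to L^2$ regularity step or an approximation/Galerkin argument; the non-standard boundary condition $\bn\cdot\rot\rot\bu=0$ is what ultimately makes Corollary~\ref{LRotEst} applicable to $\bw=\rot\bu$ and closes the loop, while the careful balancing in Young's inequality is what keeps the final constant independent of $\alpha$ and $\nu$.
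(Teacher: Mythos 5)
Your overall strategy --- reduce the $\bH^2$ claim to an $\bL^2$ bound on $\rot\rot\bu$ via \eqref{DivRot2} and Corollary~\ref{LRotEst}, read $\rot\rot\bu$ off the equation, and use \eqref{eq:exisence_condition} together with \eqref{u_alpha} to control the nonlinear term --- is the same as the paper's, and your arithmetic (the Young absorption leaving a remainder of order $\nu^{-1}\|\nabla\ba\|^2\|\rot\bu\|\le C\|\bg\|$) is consistent with the stated conclusion. But the circularity you flag at the end is a genuine gap, not a technicality, and it infects the argument from its first step: to apply Corollary~\ref{LRotEst} with $r=2$ to $\bw=\rot\bu$ you need $\rot\bw=\rot\rot\bu\in\bH_2^0(\div)$, i.e.\ $\rot\rot\bu\in\bL^2(\Omega)$, which is precisely what you are trying to prove; likewise the absorption of $\tfrac{\nu}{2}\|\rot\rot\bu\|$ and the $L^2$ pressure estimate both presuppose that the quantities being absorbed are already finite. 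A priori one only knows $\tilde\bg=\bg-\nabla p-\ba\times\rot\bu-\alpha\bu\in\bL^{3/2}(\Omega)$ (cf.\ \eqref{p_alpha}), so the formal estimate does not close as written.

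The paper fills exactly this hole with the intermediate stage you mention but do not carry out. It first verifies $\rot\rot\bu\in\bL^{3/2}(\Omega)$ and $\bn\cdot\rot\rot\bu=0$ (by testing \eqref{weak_non} with functions in $\dot{\bC}(\Omega)$ and with gradients), applies Corollary~\ref{LRotEst} with $r=\tfrac32$ to obtain $\rot\bu\in\bW^{1,\frac32}(\Omega)\hookrightarrow\bL^3(\Omega)$ with the quantitative bound \eqref{aux493}, and only then upgrades: $\|\ba\times\rot\bu\|\le\|\ba\|_{\bL^6}\|\rot\bu\|_{\bL^3}\le C\|\bg\|$ follows directly from \eqref{eq:exisence_condition} and \eqref{aux493}, with no absorption argument needed. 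From there $\nabla p\in\bL^2(\Omega)$, hence $\rot\rot\bu\in\bL^2(\Omega)$, and finally $\rot\bu\in\bH^1(\Omega)$ (the paper gets this last step from $\rot\bu\in\bH(\rot)\cap\bH_0(\div)$ and the smoothness of $\dO$, rather than from Corollary~\ref{LRotEst} at $r=2$). If you insert this $L^{3/2}\to L^3$ stage before your $L^2$ computation, your proof becomes a correct, and essentially identical, version of the paper's; without it, the bootstrap never starts.
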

\begin{proof}
By definition, the solution to \eqref{weak_non2} satisfies $(\bu,p) \in\widetilde{\bH}^0(\rot)\times W^{1,\frac32}(\Omega)$. Thanks to the embedding \eqref{emb}, we have that $\div\bu=0$ in $\Omega$ and $\bu\cdot\bn=0$ on $\dO$.  We want to apply the regularity result in \eqref{DivRot2} and so it remains to show $\rot\bu\in { \bH^1(\Omega)}$ together with a suitable bound on $\|\rot\bu \|_{\bH^1}$.

First, we note that $\rot\bu\in\bL^2(\Omega)$ and $\ba\in\bL^6(\Omega)$ imply $ \bg-\nabla p-\ba\times\rot\bu-\alpha\bu \in\bL^{\frac32}{(\Omega)}$.
Thus, from {\eqref{weak_non}}, we have that $\rot\rot\bu\in \bL^{\frac32}(\Omega)$ and  $\bn\cdot\rot\rot\bu=0$ on $\dO$. Indeed, the integrability follows by taking $\bpsi\in \dot{\bC}(\Omega)$ in the first equation of \eqref{weak_non} and noting that
\[
  \nu \la\rot\rot\bu,\bpsi\ra_{\dot{\bC}'\times\dot{\bC}}= \nu (\rot\bu,\rot\bpsi)=(\tilde\bg,\bpsi)\le c\,\|\bpsi\|_{\bL^{3}},~~\text{with}~\tilde\bg=\bg-\nabla p-\ba\times\rot\bu -\alpha \bu.
\]
Since $\dot{\bC}(\Omega)$ is dense in $\bL^3(\Omega)$, $\rot\rot\bu$ defines a bounded linear functional on $\bL^3(\Omega)$, and hence $\rot\rot\bu\in (\bL^3(\Omega))'=\bL^{\frac32}(\Omega)$. By a similar argument, but now setting $\bpsi=\nabla\phi$ with $\phi\in C^\infty(\Omega)$ in \eqref{weak_non} we show $\bn\cdot\rot\rot\bu=0$ a.e. on $\dO$. Therefore, $\rot \bu$ satisfies, for $r=\frac32$, the assumptions of Corollary~\ref{LRotEst}, and we conclude $ \rot\bu\in \bW^{1,\frac32}(\Omega)$.

Having obtained this, let us focus now on obtaining a bound on $\| \tilde \bg\|_{\bL^{3/2}}$. In particular, H\"older's inequality and the embedding $H^1 \hookrightarrow L^6$ yield
\[
  \| \ba \times \rot \bu \|_{\bL^{3/2}} \leq \| \ba \|_{\bL^6} \| \rot \bu \| \leq C \| \nabla \ba \| \| \nabla \bu \|,
\]
for a constant $C$ that depends only on $\Omega$. Using now condition \eqref{eq:exisence_condition} and estimate \eqref{u_alpha} we obtain
\begin{equation}
\label{eq:arotu3over2}
  \|  \ba \times \rot \bu \|_{\bL^{3/2}} \leq  C \nu^{3/4} \alpha_+^{1/4} \| \nabla \bu \| \leq C \nu^{1/4} \alpha_+^{-1/4} \| \bg \|.
\end{equation}
{Using, again, estimate \eqref{u_alpha} in conjunction with \eqref{p_alpha} and \eqref{eq:arotu3over2} finally yields
\begin{equation}\label{aux493}
  \|\rot\bu\|_{\bL^{3}}\le c\|\rot\bu\|_{\bW^{1,\frac32}}\le c\, \nu^{-1}\|\tilde\bg\|_{\bL^{\frac32}} \leq C \nu^{-3/4} \alpha_+^{-1/4} \| \bg \|,
\end{equation}
for a constant $C$ that only depends on $\Omega$. We can now bootstrap this estimate to conclude that $\ba\times\rot\bu\in\bL^{2}$ with the estimate
\begin{equation}
\label{eq:auxAJS}
  \| \ba\times\rot\bu\| \leq \| \ba \|_{\bL^6} \| \rot \bu \|_{\bL^3} \leq C \nu^{3/4} \alpha_+^{1/4} \nu^{-3/4} \alpha_+^{-1/4} \| \bg \| \leq C \| \bg \|,
\end{equation}
where we also used \eqref{eq:exisence_condition}, and the constant depends only on $\Omega$.

The estimates above and the second equation in \eqref{weak_non2} imply the claimed regularity for the pressure: $\nabla p\in\bL^{2}(\Omega)$. Moreover, \eqref{u_alpha} and \eqref{eq:auxAJS} yield the estimate
\[
  \| \nabla p \| \leq \| \bg \| + \alpha \| \bu \| + \| \ba \times \rot \bu \| \leq C \| \bg \|,
\]
with a constant that depends only on $\Omega$.

It remains to show the regularity of $\bu$.}
Since $\nabla p, \bg, \ba\times\rot\bu, \alpha \bu\in \bL^2(\Omega)$ we have $\rot\rot\bu\in {  \bL^2(\Omega)}$.
Recalling that $\bn\cdot\rot\bu=0$ on $\dO$ we have $\rot\bu\in\bH(\rot)\cap\bH_0(\div)$ and so, owing to the regularity of $\dO$, $\rot\bu\in {  \bH^1(\Omega)}$, as we intended to show.
{Moreover, we have the estimate
\[
  \nu \|\bu\|_{\bH^2}\le c\nu \|\rot\rot\bu\|\le  c \left( \|\nabla p \|+ \|\bg\|+\alpha\|\bu\|+ \|\ba\times\rot\bu\| \right)\le C \|\bg\|,
\]
where $C$ only depends on $\Omega$.

This completes the proof.}
\end{proof}

{For given $\alpha \geq 0$ and $\ba \in \bX$, with $\ba$ solenoidal and satisfying \eqref{eq:exisence_condition}}
we denote by $\widehat{\Delta}^{-1}_{\ba}: \bL^2(\Omega) \rightarrow \bH^2(\Omega){  \cap}\bW$ the (velocity-)solution operator to the Stokes problem \eqref{Stokes_non}. {Owing to Lemma~\ref{Lreg1}, $\widehat{\Delta}_\ba^{-1}$ is well-defined}. {We will denote by} $\widehat{\Delta}^{-1}_{0}$ the solution operator with $\ba={\bf 0}$, $\alpha=0$. {This operator has} the following properties:
\begin{equation}\label{prop}
\begin{split}
{\nu \| \widehat{\Delta}^{-1}_{\ba} \bw \|_{{\bH^2}}  \leq C(\Omega) \| \bw \| },
\quad
\div \widehat{\Delta}^{-1}_{\ba}\bw=0,\quad
\bn\cdot\rot \widehat{\Delta}^{-1}_{\ba}\bw=0~\text{on}~\dO,\\
\alpha(\bw,\widehat{\Delta}^{-1}_{\ba}\bw)+ \nu(\rot\bw,\rot\widehat{\Delta}^{-1}_{\ba}\bw)+(\ba\cdot\nabla \bw-\bw\cdot\nabla \ba,\widehat{\Delta}^{-1}_{\ba}\bw)
=\|\bw\|^2~~\forall~\bw\in\bW,~s.t.~\div\bw=0,\\
(\bw,\widehat{\Delta}^{-1}_{\ba}\bw)= \nu \|\rot\widehat{\Delta}^{-1}_{\ba}\bw\|^2
  +\alpha\|\widehat{\Delta}^{-1}_{\ba}\bw\|^2
  +(\ba\times\rot\widehat{\Delta}^{-1}_{\ba}\bw,\widehat{\Delta}^{-1}_{\ba}\bw)~~\forall~\bw\in\bW,~s.t.~\div\bw=0.
 \end{split}
\end{equation}
The first three are obvious. For the fourth  one we note that $\bw\in\bW,~s.t.~\div\bw=0$ yields $\bw\in \widetilde{\bH}^0(\rot)$. Now the identity follows by taking $\bg=\bw$ and $\bpsi=\bw$ in \eqref{weak_non2}. 
The last one follows  by taking $\bg=\bw$ and $\bpsi=\widehat{\Delta}^{-1}_{\ba}\bw$ in \eqref{weak_non2}.

\subsection{Additional forms and their associated bounds}
\label{sub:forms}

As a last preliminary step, we define some forms that will be needed for the analysis of the {velocity-vorticity} formulation \eqref{VVS}. We first define the bilinear form {$f_{\rm bc}: Q\times \bW \to \mathbb{R}$} by
\[
f_{\rm bc}(P,\bchi):=\int_{\partial\Omega} (\nabla P\times \bn)\cdot \bchi \ \diff s.
\]
The form $f_{\rm bc}$ is well-defined on $Q\times \bW$ and continuous, since for any smooth $P$ and $\bchi$ it holds that
\begin{equation} \label{bdrybound}
\begin{split}
\int_{\partial\Omega} (\nabla P\times \bn)\cdot \bchi\ \diff s&=-\int_{\partial\Omega} (\nabla P\times \bchi)\cdot \bn\ \diff s=
-\int_{\Omega} \div(\nabla P\times \bchi)\, \diff s\\&
=\int_{\Omega} \nabla P \cdot  \rot \bchi  \diff x \le
\|\nabla P\|\| {\rot}\bchi\|.
\end{split}
\end{equation}
We also note the identities
\begin{equation*}
f_{\rm bc}(P,\bchi)
=\int_{\Omega} \nabla P \cdot  \rot \bchi \diff x = \int_{\partial\Omega} P \rot \bchi \cdot\bn\, \diff s,
\end{equation*}
which implies that, for any $\ba \in \bX$ with $\div \ba=0$ such that $\widehat{\Delta}_\ba^{-1}$ is well defined, we have
\begin{equation}\label{vanish}
f_{\rm bc}(P,\widehat{\Delta}^{-1}_{\ba} \bw)=0~~\forall~\bw\in\bW.
\end{equation}

Finally, define the trilinear form $b: {\bW}\times \bW \times \bW\rightarrow \mathbb{R}$ by
\[
  b(\bu,\bv,\bw):=(\bu\cdot\nabla \bv,\bw),
\]
and note that, whenever $\div \bu = 0$, this form is skew-symmetric, i.e. $b(\bu,\bv,\bv)=0$.

We will utilize the following bounds for the nonlinear terms that arise in our analysis.

\begin{lemma}[bounds on $b$]
\label{trilinbounds}
There exists a constant $M=M(\Omega)$  such that, for every $(\bu,\bv,\bw) \in \bW^3$ we have
\begin{align}
  | (\bw \times \bu,\bv)|& \leq M  \| \bw \| \| \nabla \bu \|^{\frac12}\| \bu \|^{\frac12} \| \nabla \bv \|, \label{trilin3b} \\
  |b(\bu,\bw,\widehat{\Delta}^{-1}_{0} \bv)|+|b(\bw,\bu,\widehat{\Delta}^{-1}_{0} \bv)|  & \leq M {\nu^{-1}} \| \nabla \bu \|^{\frac12}\| \bu \|^{\frac12} \| \bw \| \| \bv \|,~~\text{if}~\div\bu=\div\bw=0, \label{trilin5} \\
 |( \bw \times \bu, \rot \widehat{\Delta}^{-1}_{0} \bw)| & \le  M {\nu^{-1}} \| \bw \|^2  \| \nabla \bu \|^{\frac12}\| \bu \|^{\frac12}. \label{trilin6}
\end{align}
\end{lemma}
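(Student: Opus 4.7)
The plan is to derive all three bounds from the same three ingredients: H\"older's inequality with the triple $(2,3,6)$, the Gagliardo--Nirenberg estimate \eqref{aux33}, and the $\bH^2$-regularity of $\widehat{\Delta}^{-1}_0$ from \eqref{prop}, combined with the Sobolev embedding $\bH^2(\Omega)\hookrightarrow \bW^{1,6}(\Omega)$. The skew-symmetry of $b$ on divergence-free fields in $\bW$ (which follows because any $\bu\in\bW$ satisfies $\bu\cdot\bn=0$ on $\dO$) will be needed only for \eqref{trilin5}.

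For \eqref{trilin3b}, I would apply H\"older with exponents $(2,3,6)$ to get $|(\bw\times\bu,\bv)|\le \|\bw\|\,\|\bu\|_{\bL^3}\,\|\bv\|_{\bL^6}$, then bound $\|\bu\|_{\bL^3}$ by \eqref{aux33} (upgraded with \eqref{equiv}, or just Gagliardo--Nirenberg directly), and $\|\bv\|_{\bL^6}\le C\|\nabla\bv\|$ via Sobolev (using the Poincar\'e-type inequality available on $\bW$). For \eqref{trilin6}, the same H\"older split applied to $|(\bw\times\bu,\rot\widehat{\Delta}^{-1}_0\bw)|$ yields $\|\bw\|\,\|\bu\|_{\bL^3}\,\|\rot\widehat{\Delta}^{-1}_0\bw\|_{\bL^6}$; here $\|\rot\widehat{\Delta}^{-1}_0\bw\|_{\bL^6}\le C\|\widehat{\Delta}^{-1}_0\bw\|_{\bH^2}\le C\nu^{-1}\|\bw\|$ by the embedding and \eqref{prop}, and Gagliardo--Nirenberg handles $\|\bu\|_{\bL^3}$ as before.

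The slightly delicate step is \eqref{trilin5}, since the target has only $\|\bw\|$ (not $\|\nabla\bw\|$) and only the interpolated quantity $\|\bu\|^{1/2}\|\nabla\bu\|^{1/2}$. A naive H\"older application would leave a full first derivative on $\bw$ or on $\bu$, so I would first use the skew-symmetry identity: since $\div\bu=\div\bw=0$ and $\bu\cdot\bn=\bw\cdot\bn=0$ on $\dO$, one has $b(\bu,\bw,\widehat{\Delta}^{-1}_0\bv)=-b(\bu,\widehat{\Delta}^{-1}_0\bv,\bw)$ and $b(\bw,\bu,\widehat{\Delta}^{-1}_0\bv)=-b(\bw,\widehat{\Delta}^{-1}_0\bv,\bu)$. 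This moves the gradient onto the smooth factor $\widehat{\Delta}^{-1}_0\bv$. Then H\"older $(3,6,2)$ on the first rewritten form gives $\|\bu\|_{\bL^3}\,\|\nabla\widehat{\Delta}^{-1}_0\bv\|_{\bL^6}\,\|\bw\|$, and H\"older $(2,6,3)$ on the second gives $\|\bw\|\,\|\nabla\widehat{\Delta}^{-1}_0\bv\|_{\bL^6}\,\|\bu\|_{\bL^3}$; in both cases \eqref{prop} together with $\bH^2\hookrightarrow\bW^{1,6}$ yields $\|\nabla\widehat{\Delta}^{-1}_0\bv\|_{\bL^6}\le C\nu^{-1}\|\bv\|$, and \eqref{aux33} produces $\|\bu\|^{1/2}\|\nabla\bu\|^{1/2}$.

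The main obstacle is recognizing that the correct H\"older exponents in \eqref{trilin5} must be $(2,6,3)$ rather than $(2,3,6)$ for the second term, in order to put $\bu$ (and not $\widehat{\Delta}^{-1}_0\bv$) into $\bL^3$; otherwise one would only extract $\|\nabla\bu\|$ via $\bL^6$ and miss the interpolated factor. Everything else is a routine bookkeeping of Sobolev exponents; the one thing to double-check is that each use of skew-symmetry is legitimate, which follows once we note that all three of $\bu,\bw,\widehat{\Delta}^{-1}_0\bv$ lie in $\bW$ and the first two are solenoidal.
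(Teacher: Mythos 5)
Your proposal is correct and follows essentially the same route as the paper: skew-symmetry (valid since $\bu,\bw$ are solenoidal with vanishing normal trace) is used to move the gradient onto $\widehat{\Delta}^{-1}_0\bv$ in both terms of \eqref{trilin5}, followed by the same H\"older splits, the Gagliardo--Nirenberg bound \eqref{aux33}, and the $\bH^2$-regularity from \eqref{prop} with $\bH^2\hookrightarrow\bW^{1,6}$. No substantive differences.
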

\begin{proof}
The constant $M$ will depend only on $\Omega$.  Each of the bounds in this lemma will hold with a potentially different constant that depends only on $\Omega$, and we take $M$ to be the maximum of these constants.
Estimate \eqref{trilin3b} follows from H\"older's inequality, Sobolev inequalities and \eqref{aux33}.

{For \eqref{trilin5}, to bound the first term, we use the fact that $\div \bu = 0$, H\"older's inequality and the embedding $H^1 \hookrightarrow L^6$ to obtain
\begin{align*}
  |b(\bu,\bw,\widehat{\Delta}^{-1}_{0} \bv)| &= \left|- (\bu \cdot \nabla \widehat{\Delta}^{-1}_0 \bv, \bw)  \right|
    \leq \| \bu \|_{\bL^3} \| \nabla \widehat{\Delta}^{-1}_0 \bv \|_{\bL^6} \| \bw \|
    \leq C\| \bu \|_{\bL^3} \| \widehat{\Delta}^{-1}_0 \bv \|_{\bH^2} \| \bw \| \\
  &\leq C\nu^{-1} \| \nabla \bu \|^{1/2} \|\bu\|^{1/2} \| \bv \| \| \bw \|,
\end{align*}
where in the last step we used \eqref{prop} and \eqref{aux33}}.
Similarly, for the second term, we obtain
\begin{align*}
  |b(\bw,\bu,\widehat{\Delta}^{-1}_{0} \bv)| & = \left|  (\bw\cdot\nabla \bu, \widehat{\Delta}^{-1}_{0} \bv) \right|
    = \left| (\bw\cdot\nabla \widehat{\Delta}^{-1}_{0} \bv,\bu ) \right|
    \leq C \| \bw \|\|\bu \|_{\bL^3}  \|\nabla \widehat{\Delta}^{-1}_{0} {\bv} \|_{\bL^6} \\
 & \leq C {\nu^{-1}} \| \nabla \bu \|^{\frac12}\| \bu \|^{\frac12} \| \bw \|\| \bv \|.
\end{align*}
Adding these bounds gives the result claimed in \eqref{trilin5}.

For \eqref{trilin6}, H\"older and Sobolev inequalities along with \eqref{aux33} and {\eqref{prop}} provide
\begin{align*}
 |( \bw \times \bu, \rot \widehat{\Delta}^{-1}_{0} \bw)| & \le \| \bw \| \| \bu \|_{\bL^3} \| \rot \widehat{\Delta}^{-1}_{0}\bw  \|_{{\bL^6}}
  \le C \| \bw \| \| \nabla \bu \|^{\frac12}\| \bu \|^{\frac12}  \| \widehat{\Delta}^{-1}_{0}\bw \|_{{\bH^2}} \\
 & \le C \nu^{-1} \| \bw \|^2 \| \nabla \bu \|^{\frac12}\| \bu \|^{\frac12},
 \end{align*}
which proves the stated result.
\end{proof}

\section{{Weak formulation of the velocity-vorticity problem} in the presence of solid walls and its well-posedness}
\label{sec:stationary}

In this section, we will prove that the velocity-vorticity formulation with no-slip velocity boundary conditions and the corresponding vorticity boundary conditions {\eqref{VVS}}
is well posed. {Our method of proof will be as follows: we will obtain a priori bounds which, through a fixed point argument, will allow us to show existence. In addition, provided the data satisfies further restrictions, we will be able to show uniqueness. The highlight of our approach lies in that our arguments hinge on tools developed in section \ref{sub:divcurl} and the problems studied in section \ref{sub:Stokes}.}

The weak formulation {of \eqref{VVS} is obtained in a standard way, that is multiplying by suitable test functions and integrating by parts. It reads:}
given $\blf\in \bL^2(\Omega)$,
find $(\bu,P,\bw, \eta)\in  {\bX\times Q \times \bW\times \LL}$ satisfying, for all $(\bv, \pi,\bchi,\lambda)\in  \bX\times Q \times \bW\times \LL$,
\begin{align}
\alpha(\bu,\bv)+ \nu(\nabla \bu,\nabla \bv) - (P,\div \bv) + (\bw \times \bu,\bv) &=   (\blf,\bv), \label{weak1} \\
(\div \bw, \lambda)=(\div \bu, {\pi}) & =  0,\\
\alpha(\bw,\bchi)+ \nu(\rot \bw,\rot \bchi) + \nu(\div \bw,\div \bchi)
+ b(\bu,\bw,\bchi)  \ \ \ \ \ & \\ - b(\bw,\bu,\bchi) - (\eta,\div \bchi)& =(\blf,\rot\bchi) - f_{\rm bc}(P, \bchi).\label{weak3}
\end{align}
We note that the term $\int_{\partial\Omega} (\blf \times \bn) \cdot \bchi \diff s$ coming from the vorticity boundary condition \eqref{bc} cancels, when we integrate by parts in $(\rot\blf,\bchi)$.  

\subsection{{A priori estimates and uniqueness}}
\label{sub:apriori_unique}

We begin the analysis of \eqref{weak1}--\eqref{weak3} with a priori bounds on solutions.

\begin{lemma}[a priori bounds]
\label{lem:aprioribounds}
Assume that system \eqref{weak1}--\eqref{weak3} has solutions. Then they satisfy
\begin{equation}
\label{eq:velestimate}
{ \| \bu \|_\ast \leq \sqrt2\alpha_+^{-1/2}  \| \blf \|},
\end{equation}
{where the norm $\|\cdot\|_\ast$ was defined in \eqref{eq:defofastnorm} and $\alpha_+$ in \eqref{eq:defofalphaplus}.}
If, in addition, the forcing term $\blf$ satisfies
\begin{equation}
\label{eq:f_small}
  \| \blf\| \leq \frac{C^*}{\sqrt{2}} \nu^{5/4} {\alpha_+^{3/4}},
\end{equation}
where the constant $C^*$ is the same as in \eqref{eq:exisence_condition} of Lemma \ref{Lreg}, then they verify
\begin{equation}
\label{eq:H2estsmalldata}
  \| \bw \| \leq {K_1 := \nu^{-1} C(\Omega)} \| \blf\|_{\bH^{-1}}, \quad
  { \nu \|\bu \|_{\bH^2} + \|\nabla P\| \leq K_2, \quad \| \bw \|_\ast +\|\eta \|  \leq K_3},
\end{equation}
{with $K_2=K_2(\nu, \alpha_+, \blf, \Omega )$ and $K_3 = K_3(\nu, \alpha_+, \blf, \Omega )$. The constants $K_2$ and $K_3$, however, remain uniformly bounded as $\alpha \to \infty$, while all the other data of the problem remain fixed}.
\end{lemma}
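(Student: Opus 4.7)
My approach is to derive the four bounds sequentially, with the order dictated by the fact that the vorticity bounds depend on the pressure bound through the boundary functional $f_{\rm bc}$, and the pressure in turn depends on the vorticity through the momentum equation. Breaking this loop is the key.

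The velocity estimate \eqref{eq:velestimate} is immediate: setting $\bv=\bu$ in \eqref{weak1}, the pressure term $(P,\div\bu)$ vanishes since $\bu\in\bX$ is divergence free, and $(\bw\times\bu,\bu)=0$ pointwise. One is left with $\|\bu\|_\ast^2=(\blf,\bu)$, and \eqref{alpha_b} gives \eqref{eq:velestimate}. The smallness assumption \eqref{eq:f_small} then guarantees, via the same inequality, that $\|\nabla\bu\|\le C^*\nu^{3/4}\alpha_+^{1/4}$, which is precisely condition \eqref{eq:exisence_condition} with $\ba=\bu$; hence the Stokes-type operator $\widehat{\Delta}^{-1}_{\bu}$ from Lemma~\ref{Lreg1} is well-defined and satisfies \eqref{prop}. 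To produce the $\bL^2$-bound on $\bw$, I would then test \eqref{weak3} with $\bchi:=\widehat{\Delta}^{-1}_{\bu}\bw\in\bH^2\cap\bW$. The properties listed in \eqref{prop} are tailor-made for this choice: $\div\bchi=0$ and $\bn\cdot\rot\bchi=0$ kill the $\nu(\div\bw,\div\bchi)$ and $(\eta,\div\bchi)$ contributions; the third identity in \eqref{prop} collapses the remaining left-hand side into $\|\bw\|^2$; and, crucially, \eqref{vanish} eliminates the boundary pressure functional $f_{\rm bc}(P,\bchi)$. What remains is $\|\bw\|^2=(\blf,\rot\bchi)$, and the regularity $\|\bchi\|_{\bH^2}\le C\nu^{-1}\|\bw\|$ from \eqref{prop} then yields $\|\bw\|\le K_1$.

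With $\|\bw\|$ under control, I would view \eqref{weak1} as a steady Stokes system with right-hand side $\blf-\alpha\bu-\bw\times\bu$ and invoke \eqref{eq:CZStokes} at $r=2$. The only nontrivial term is $\|\bw\times\bu\|\le\|\bw\|\|\bu\|_{\bL^\infty}$, which via Agmon's inequality $\|\bu\|_{\bL^\infty}\le C\|\bu\|^{1/4}\|\bu\|_{\bH^2}^{3/4}$ and Young's inequality can be absorbed into $\nu\|\bu\|_{\bH^2}$; combined with $\alpha\|\bu\|\le\sqrt\alpha\,\|\bu\|_\ast\le\sqrt{2\alpha/\alpha_+}\,\|\blf\|\le\sqrt 2\,\|\blf\|$, this yields the bound $\nu\|\bu\|_{\bH^2}+\|\nabla P\|\le K_2$. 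For $\|\bw\|_\ast$ I would test \eqref{weak3} with $\bchi=\bw$: the skew-symmetric term $b(\bu,\bw,\bw)$ vanishes, $(\eta,\div\bw)=0$ and $\nu\|\div\bw\|^2=0$ since $\div\bw=0$, and the residual nonlinearity admits the bound $|b(\bw,\bu,\bw)|\le C\|\bw\|^{1/2}\|\rot\bw\|^{3/2}\|\nabla\bu\|$ via Gagliardo--Nirenberg combined with \eqref{equiv}, which is absorbed into $\nu\|\rot\bw\|^2$ by Young; the forcing and boundary contributions are controlled by $\|\blf\|$ and the already-obtained bound on $\|\nabla P\|$ through \eqref{bdrybound}. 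Finally, the bound on $\|\eta\|$ is recovered by restricting \eqref{weak3} to test functions $\bchi\in\bH^1_0\subset\bW$ (so that $f_{\rm bc}(P,\bchi)=0$) and invoking the Ne\v cas inequality \eqref{infsup1} to invert $\div$, the residual being dominated by quantities already controlled.

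The hard part is the pressure--vorticity coupling through $f_{\rm bc}$: a naive energy argument for $\bw$ requires $\|\nabla P\|$ first, while the Stokes regularity used to bound $\nabla P$ through \eqref{weak1} requires control of $\bw\times\bu$. The device that breaks this loop is the test $\bchi=\widehat{\Delta}^{-1}_{\bu}\bw$ of the second step, engineered via \eqref{prop} and \eqref{vanish} to simultaneously annihilate the pressure from the boundary integral and collapse the energy-like left-hand side to $\|\bw\|^2$. Verifying that $K_2$ and $K_3$ remain bounded as $\alpha\to\infty$ with all other data fixed then amounts to observing that $\|\bu\|$ and $\|\nabla\bu\|$ vanish in that limit while $\|\bw\|\le K_1$ stays $\alpha$-independent and $\alpha\|\bu\|$ is controlled by $\|\blf\|$, so that every right-hand side appearing in Steps 3--4 remains bounded.
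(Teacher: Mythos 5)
Your overall architecture is correct and, in its crucial step, identical to the paper's: the velocity bound by testing with $\bv=\bu$, the observation that \eqref{eq:f_small} makes $\ba=\bu$ admissible in Lemma \ref{Lreg}, and above all the choice $\bchi=\widehat{\Delta}^{-1}_{\bu}\bw$ in \eqref{weak3}, which via \eqref{vanish} and \eqref{prop} annihilates the pressure boundary functional and collapses the left-hand side to $\|\bw\|^2$ --- this is exactly how the paper breaks the pressure--vorticity loop. Where you genuinely diverge is the bound $\nu\|\bu\|_{\bH^2}+\|\nabla P\|\le K_2$. The paper bootstraps: $\bw\times\bu\in\bL^{3/2}$ gives $\bu\in\bW^{2,3/2}\hookrightarrow\bL^{14}$, hence $\bw\times\bu\in\bL^{7/4}$, hence $\bu\in\bW^{2,7/4}\hookrightarrow\bL^\infty$, hence $\bw\times\bu\in\bL^2$, and only then applies \eqref{eq:CZStokes} with $r=2$; each stage uses only already-established integrability, and the constants $\calC_0,\calC_1,\calC_2$ are tracked explicitly to verify boundedness as $\alpha\to\infty$. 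You instead go directly to $r=2$ with Agmon's inequality $\|\bu\|_{\bL^\infty}\le C\|\bu\|^{1/4}\|\bu\|_{\bH^2}^{3/4}$ and Young absorption into $\nu\|\bu\|_{\bH^2}$. This is shorter and yields a cleaner $K_2$ (a polynomial in $K_1$, $\nu^{-1}$ and $\|\blf\|$ in one shot), but it contains a step you must justify: absorbing $\epsilon\|\bu\|_{\bH^2}$ into the left-hand side is only legitimate if you already know $\|\bu\|_{\bH^2}<\infty$, which is not part of the hypotheses (the solution is only assumed to lie in $\bX\times Q\times\bW\times\LL$). The repair is easy --- since $\bu,\bw\in\bH^1\hookrightarrow\bL^6$ one has $\bw\times\bu\in\bL^3\subset\bL^2$ qualitatively, so \eqref{eq:CZStokes} with $r=2$ already gives $\bu\in\bH^2$ with a bound depending on $\|\nabla\bw\|$, after which your absorption produces the quantitative, $\|\nabla\bw\|$-free estimate --- but as written this is a gap. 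Your treatment of $\|\bw\|_\ast$ (absorbing $b(\bw,\bu,\bw)\le C\|\bw\|^{1/2}\|\rot\bw\|^{3/2}\|\nabla\bu\|$ by Young into $\nu\|\rot\bw\|^2$) also differs mildly from the paper, which instead estimates $b(\bw,\bu,\bw)\le\|\bw\|\,\|\nabla\bu\|_{\bL^6}\|\bw\|_{\bL^3}$ using the already-proved $\bH^2$ bound on $\bu$; both are valid and give a $K_3$ bounded as $\alpha\to\infty$. Your closing remarks on the $\alpha\to\infty$ behavior ($\alpha\|\bu\|\le\sqrt{2}\|\blf\|$, $\|\nabla\bu\|$ decaying, $K_1$ independent of $\alpha$) match the paper's accounting.
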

\begin{proof}
We begin with the velocity {bound}. Setting $\bv=\bu$ in \eqref{weak1} {and using \eqref{alpha_b}} yields estimate \eqref{eq:velestimate}.

Notice now that, if \eqref{eq:f_small} holds, then \eqref{eq:velestimate} implies that $\ba = \bu$ satisfies condition \eqref{eq:exisence_condition} of Lemma \ref{Lreg} and, consequently, the operator $\widehat{\Delta}^{-1}_{\bu}$ is well defined. We set $\bchi=\widehat{\Delta}^{-1}_{\bu} \bw$ in \eqref{weak3} and observe that, due to \eqref{vanish} and \eqref{prop}, the $\eta$-term and boundary functional vanish, thus \eqref{weak3} reduces to
\[
  \alpha (\bw, \widehat{\Delta}^{-1}_{\bu} \bw ) + \nu (\rot \bw, \rot \widehat{\Delta}^{-1}_{\bu} \bw) + b(\bu,\bw,\widehat{\Delta}^{-1}_{\bu} \bw) -
  b(\bw,\bu,\widehat{\Delta}^{-1}_{\bu} \bw) = (\blf,\rot \widehat{\Delta}^{-1}_{\bu} \bw),
\]
which when compared with \eqref{prop} yields
\[
  \| \bw \|^2 =  ( \blf, \rot \widehat{\Delta}^{-1}_{\bu} \bw)  \le \| \blf \|_{\bH^{-1}} \|\rot \widehat{\Delta}^{-1}_{\bu} \bw\|_{\bH^1}  \le C\| \blf \|_{\bH^{-1}} \|\widehat{\Delta}^{-1}_{\bu} \bw\|_{\bH^2}\le {\nu^{-1} C(\Omega)}\| \blf \|_{\bH^{-1}} \|\bw\|,
\]
{which is the first estimate in \eqref{eq:H2estsmalldata}}.

To obtain the velocity--pressure part of estimate \eqref{eq:H2estsmalldata} we employ a bootstrapping argument.
{H\"older's inequality, the embedding $H^1\hookrightarrow L^6$, the bound \eqref{eq:velestimate} on the velocity, and the $L^2$-bound on the vorticity yield the existence of a constant $C(\Omega)$ that depends only on the domain $\Omega$, for which
\begin{align*}
  \| \bw \times \bu \|_{\bL^{3/2}} &\leq \| \bw \| \| \bu \|_{\bL^6} \leq C(\Omega) \nu^{-1/2} \alpha_+^{-1/2} K_1 \| \blf \| =: \calC_0(\nu, \alpha_+, \blf, \Omega),
\end{align*}
and, consequently $\bw \times \bu \in \bL^{\frac32}(\Omega)$. We now apply the regularity result for the Stokes problem \eqref{eq:CZStokes} using $\bf F = \blf - \bw \times \bu-\alpha\bu \in \bL^{\frac32}(\Omega)$ with the bound
\[
  \|{\bf F} \|_{\bL^{\frac32}}  \leq C(\|\blf\| + \alpha\|\bu\| ) + \|\bw \times \bu\|_{\bL^{\frac32}}
  \leq C \| \blf \| + \calC_0(\nu, \alpha_+, \blf, \Omega) =: \calC_1(\nu, \alpha_+, \blf, \Omega),
\]
where we used \eqref{eq:velestimate} and the fact that $\alpha \leq \alpha_+$. The estimate given above shows that the velocity part of the solution to \eqref{weak1}--\eqref{weak3} satisfies
\begin{equation}\label{aux6}
\| \bu \|_{\bW^{2, \frac{3}{2}}}  \leq \nu^{-1} \calC_1(\nu, \alpha_+, \blf, \Omega ).
\end{equation}

We now invoke that, for every $r < \infty$, we have the embedding $W^{2,\frac32} \hookrightarrow L^r$ to obtain
\[
  \| \bw \times \bu \|_{\bL^{\frac74}} \leq \| \bw \| \| \bu \|_{\bL^{14}} \leq C \| \bw \| \| \bu \|_{\bW^{2,\frac32}}
    \leq C(\Omega) \nu^{-1} K_1 \calC_1(\nu, \alpha_+, \blf, \Omega ),
\]
where, in the last step, we used \eqref{aux6} and the $L^2$-estimate on the vorticity. This shows that ${\bf F} \in \bL^{\frac74}(\Omega)$ with the bound
\[
  \| {\bf F} \|_{\bL^{\frac74}} \leq C( \| \blf \| + \alpha \| \bu \| ) + \| \bw \times \bu \|_{\bL^{\frac74}}
  \leq C\| \blf \| + C(\Omega) \nu^{-1} K_1 \calC_1(\nu, \alpha_+, \blf, \Omega )
   =: \calC_2(\nu, \alpha_+, \blf, \Omega ).
\]
Using, once again, \eqref{eq:CZStokes} yields that $\bu \in \bW^{2,\frac74}(\Omega)$ with the estimate
\[
  \| \bu \|_{\bW^{2,\frac74}} \leq \nu^{-1} \calC_2(\nu, \alpha_+, \blf, \Omega ).
\]
Finally, we use the embedding $W^{2,\frac74} \hookrightarrow L^\infty$ to assert that
\[
  \| \bw \times \bu \| \leq \| \bw \| \| \bu \|_{\bL^\infty} \leq C(\Omega) \nu^{-1} K_1 \calC_2(\nu, \alpha_+, \blf, \Omega ).
\]
This gives us that ${\bf F} \in \bL^2(\Omega)$ with the estimate
\[
  \| {\bf F } \| \leq \| \blf \| + \alpha \| \bu \| + \| \bw \times \bu \| \leq C\| \blf \| + C \nu^{-1} K_1 \calC_2(\nu, \alpha_+, \blf, \Omega ),
\]
so that invoking, one last time, \eqref{eq:CZStokes} we obtain
\begin{equation}\label{aux2}
  \nu \| \bu \|_{\bH^2} + \| \nabla P\|  \leq C \| \blf \|  + C \nu^{-1} K_1 \calC_2(\nu, \alpha_+, \blf, \Omega )
  =: K_2(\nu, \alpha_+, \blf, \Omega ).
\end{equation}

Let us now bound the $H^1$-norm of the vorticity and the $L^2$-norm of $\eta$. Setting $\bchi = \bw$ in \eqref{weak3} yields
\[
  \alpha \| \bw \|^2 +\nu \| \rot \bw \|^2 + \nu \| \div \bw \|^2 = b(\bw,\bu,\bw) + (\blf,\rot\bw) - f_{\rm bc}(P,\bw).
\]
Applying \eqref{equiv} and then using the continuity of the functionals on the right hand side we obtain
\[
  \min\{1,C_1^{-1}\} \| \bw\|_\ast^2 \leq C( \|\nabla P \| + \| \blf \| ) \| \nabla\bw \| + b(\bw,\bu,\bw).
\]
To control the trilinear term, we use \eqref{eq:H2estsmalldata} and the embedding $H^2 \hookrightarrow W^{1,6}$ to obtain
\[
  |b(\bw,\bu,\bw)|  \leq \| \bw\| \| \nabla \bu \|_{\bL^6} \| \bw \|_{\bL^3} \leq C(\Omega) \nu^{-1} K_1 K_2 \| \nabla \bw \|,
\]
and, as a consequence,
\[
  \min\{ 1, C_1^{-1} \} \| \bw \|_\ast^2 \leq C \left( \| \blf \| + K_2(\nu, \alpha_+, \blf, \Omega ) + \nu^{-1} \| \blf \| K_2(\nu, \alpha_+, \blf, \Omega ) \right) \| \nabla \bw \|,
\]
from which the bound
\[
  \|  \bw \|_\ast \leq C \nu^{-1/2} \left( \| \blf \| + K_2+ \nu^{-1} K_1 K_2 \right) =: K_3(\nu, \alpha_+, \blf, \Omega )
\]
follows. An application of \eqref{infsup1} yields the desired bound for $\eta$.}

{It remains then to show that $K_2$ and $K_3$ are bounded as $\alpha$ grows large. To see this, we first observe that for $\alpha$ sufficiently large we have $\alpha_+ = \alpha$, and so we need to study the dependence on $\alpha_+$. In the course of the proof of estimates \eqref{eq:H2estsmalldata}, we obtained that
\[
  K_3 = C \nu^{-1/2}\left( \| \blf \| + K_2 + \nu^{-1} K_1 K_2 \right).
\]
Since $K_1$ does not depend on $\alpha_+$, it then is sufficient to show that $K_2$ remains bounded as $\alpha \to \infty$. From \eqref{aux2} we have
\begin{align*}
  K_2 &= C\left( \| \blf \| + \nu^{-1} K_1 \calC_2 \right) = C\left[ \| \blf \| + c\nu^{-1} K_1 \left( \| \blf \| + \nu^{-1} K_1 \calC_1 \right) \right] \\
  &= C\left\{ \| \blf \| + c\nu^{-1} K_1 \left[ \| \blf \| + \nu^{-1} K_1 \left( \| \blf \| + \nu^{-1/2} \alpha_+^{-1/2} \right) \right] \right\},
\end{align*}
where we successively applied the definitions of $\calC_j$, $j=0,1,2$. The only power of $\alpha_+$ that appears is negative and so we are able to conclude.}
\end{proof}

\begin{remark}[{large $\alpha$}]
If all the {problem} data besides $\alpha$ is {kept} fixed, the a priori bounds of Lemma \ref{lem:aprioribounds} will all hold if  $\alpha$ is taken sufficiently large.
\end{remark}

With the \emph{a priori} bounds on the solution of Lemma \ref{lem:aprioribounds}, we are able to prove uniqueness, under somewhat more restrictive conditions on the data.

\begin{theorem}[uniqueness]
\label{thm:uniqu_stat}
Assume that, {in addition to \eqref{eq:f_small}}, the problem data satisfies
\[
  { \alpha_1:=1-2 \sqrt{2} M\nu^{-\frac54}\alpha_+^{-\frac34} \| \blf \|>0 },
\]
and
\[
{2\sqrt{2} M^2 \nu^{-2}\alpha_+^{-1} \alpha_1^{-1} \| \blf \|  K_1 < 1}
\]
where {$K_1$}
is the data dependent constant from \eqref{eq:H2estsmalldata}
{for which, every vorticity solution to \eqref{weak1}--\eqref{weak3} verifies $\| \bw \|\le K_1$}.
Then solutions to \eqref{weak1}--\eqref{weak3} are unique.
\end{theorem}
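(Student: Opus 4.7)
My plan is a direct energy/contraction argument on the difference of two hypothetical solutions. Denote two such solutions by $(\bu_i,P_i,\bw_i,\eta_i)$, $i=1,2$, and let $\bu=\bu_1-\bu_2$, $P=P_1-P_2$, $\bw=\bw_1-\bw_2$, $\eta=\eta_1-\eta_2$. Since $\div\bu_i=\div\bw_i=0$ and $\bu_i|_{\dO}={\bf 0}$, $\bw_i\cdot\bn|_{\dO}=0$, the differences inherit the same solenoidal/boundary structure. The nonlinearities split as $\bw_1\times\bu_1-\bw_2\times\bu_2=\bw\times\bu_1+\bw_2\times\bu$ and $b(\bu_1,\bw_1,\cdot)-b(\bu_2,\bw_2,\cdot)=b(\bu,\bw_1,\cdot)+b(\bu_2,\bw,\cdot)$ (and analogously for $b(\bw_i,\bu_i,\cdot)$), yielding a linear system for $(\bu,P,\bw,\eta)$ with coefficients depending on $(\bu_i,\bw_i)$.

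\emph{Step one (velocity control).} I would test the momentum difference with $\bv=\bu$: the pressure drops out because $\div\bu=0$, and $(\bw_2\times\bu,\bu)=0$ pointwise, leaving $\|\bu\|_\ast^2=-(\bw\times\bu_1,\bu)$. Using \eqref{trilin3b}, the a priori bound \eqref{eq:velestimate} for $\bu_1$ recast through \eqref{alpha_b2} as $\|\bu_1\|^{1/2}\|\nabla\bu_1\|^{1/2}\le\sqrt2\,\alpha_+^{-3/4}\nu^{-1/4}\|\blf\|$, and $\|\nabla\bu\|\le\nu^{-1/2}\|\bu\|_\ast$, I arrive at
\[
\|\bu\|_\ast\le\sqrt2\,M\,\alpha_+^{-3/4}\nu^{-3/4}\|\blf\|\,\|\bw\|,\qquad \|\bu\|^{1/2}\|\nabla\bu\|^{1/2}\le\sqrt2\,M\,\alpha_+^{-1}\nu^{-1}\|\blf\|\,\|\bw\|,
\]
the second inequality following by another application of \eqref{alpha_b2}.

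\emph{Step two (vorticity absorption).} The crucial choice is to test the vorticity difference with $\bchi=\widehat{\Delta}^{-1}_{0}\bw\in\bW$. Because $\div\bchi=0$ and $\bn\cdot\rot\bchi=0$, the terms $(\div\bw,\div\bchi)$ and $(\eta,\div\bchi)$ vanish, and $f_{\rm bc}(P,\bchi)=0$ by \eqref{vanish}. The third identity in \eqref{prop} (specialized to $\ba=\mathbf0$, $\alpha=0$) collapses the principal part to $\|\bw\|^2$, while the zero-order piece $\alpha(\bw,\widehat{\Delta}^{-1}_0\bw)$ is nonnegative by the last identity in \eqref{prop} and can be discarded. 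Of the four cross trilinear terms, I would bound $|b(\bw,\bu_1,\widehat{\Delta}^{-1}_0\bw)|$ and $|b(\bu_2,\bw,\widehat{\Delta}^{-1}_0\bw)|$ through \eqref{trilin5} combined with the a priori control on $\|\bu_i\|^{1/2}\|\nabla\bu_i\|^{1/2}$; their combined contribution is $2\sqrt2\,M\nu^{-5/4}\alpha_+^{-3/4}\|\blf\|\,\|\bw\|^2$, i.e.\ exactly the quantity that the first hypothesis forces strictly below one, so that absorption leaves $\alpha_1\|\bw\|^2$ on the left. The remaining two terms $|b(\bu,\bw_1,\widehat{\Delta}^{-1}_0\bw)|+|b(\bw_2,\bu,\widehat{\Delta}^{-1}_0\bw)|$ are also handled by \eqref{trilin5} together with $\|\bw_i\|\le K_1$, giving $2M\nu^{-1}K_1\|\bu\|^{1/2}\|\nabla\bu\|^{1/2}\|\bw\|$; inserting the bound from step one this is at most $2\sqrt2\,M^2 K_1\alpha_+^{-1}\nu^{-2}\|\blf\|\,\|\bw\|^2$, which the second hypothesis makes strictly less than $\alpha_1\|\bw\|^2$. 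Therefore $\bw=0$.

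\emph{Step three (back-substitution and main obstacle).} Once $\bw=0$, the momentum difference becomes a homogeneous Stokes problem for $(\bu,P)$, so $\bu=0$ and, via \eqref{infsup1}, $P=0$. The vorticity difference then reduces to $(\eta,\div\bchi)=0$ for all $\bchi\in\bW$, from which $\eta=0$ follows by a Bogovskii-type construction (any $q\in L^2_0(\Omega)$ is $\div\bv$ for some $\bv\in\bH^1_0(\Omega)\subset\bW$). The main technical hurdle lies in step two: identifying that among the four cross trilinear terms, precisely the two with $\bw$ in two slots must be absorbed into the LHS (producing the $\alpha_1$ constant), while the two carrying $\bu$ must be perturbatively estimated via step one and the $L^2$-bound $K_1$ on $\bw_i$, so that the two smallness constants of the hypothesis arise naturally and in the right order.
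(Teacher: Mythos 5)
Your proposal is correct and follows essentially the same route as the paper: test the momentum difference with $\be_u$ and the vorticity difference with $\widehat{\Delta}^{-1}_{0}\be_w$, use \eqref{vanish} and \eqref{prop} to reduce the principal part to $\|\be_w\|^2$, absorb the two trilinear terms carrying $\be_w$ twice (producing $\alpha_1$) via \eqref{trilin5}, \eqref{alpha_b2} and \eqref{eq:velestimate}, and close the loop with the $K_1$-bound and the second smallness condition; the only differences are the (equivalent) splitting of the nonlinearities, the order in which the two coupled inequalities are combined, and your explicit discarding of the nonnegative term $\alpha(\be_w,\widehat{\Delta}^{-1}_{0}\be_w)$, which the paper passes over silently.
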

\begin{proof}
Suppose there are two solutions to \eqref{weak1}--\eqref{weak3}, $(\bu_1,P_1,\bw_1,\eta_1),\ (\bu_2,P_2,\bw_2,\eta_2) \in \bX\times Q \times \bW\times\LL$, and set
\[
\be_u:=\bu_1 - \bu_2,\ e_P:=P_1 - P_2,\ \be_w:=\bw_1-\bw_2,\ e_\eta:=\eta_1 - \eta_2,.
\]
Subtracting \eqref{weak1}--\eqref{weak3} for each one of these two solutions gives, for all $(\bv,q,\bchi,\lambda)\in \bX\times Q \times \bW \times \LL$,
\begin{eqnarray*}
\alpha(\be_u,\bv)+\nu(\nabla \be_u,\nabla \bv) - (e_P,\div \bv) + (\bw_1 \times \be_u,\bv) +  (\be_w \times \bu_2,\bv) &=  &0, \\
(\div \be_w,\lambda)=(\div \be_u,q) & = & 0,\\
\alpha(\be_w,\bchi)+\nu(\rot \be_w,\rot \bchi) + \nu(\div \be_w,\div \bchi) + b(\be_u,\bw_2,\bchi) - b(\be_w,\bu_2,\bchi)  \\ + b(\bu_1,\be_w,\bchi) - b(\bw_1,\be_u,\bchi)- (e_\eta,\div \bchi)
& = & - f_{bc}(e_P,\bchi).
\end{eqnarray*}

Set now $\bchi=\widehat{\Delta}^{-1}_{0}\be_w$, which makes the pressure boundary term vanish, to obtain
\begin{equation*}
 \| \be_w \|^2 = - b(\be_u,\bw_2,\widehat{\Delta}^{-1}_{0}\be_w) + b(\be_w,\bu_2,\widehat{\Delta}^{-1}_{0}\be_w)  -  b(\bu_1,\be_w,\widehat{\Delta}^{-1}_{0}\be_w) + b(\bw_1,\be_u,\widehat{\Delta}^{-1}_{0}\be_w),
\end{equation*}
where we used \eqref{prop}. We now estimate each one of the trilinear terms on the right hand side of this identity
using \eqref{trilin5}, \eqref{alpha_b2} and \eqref{eq:velestimate} and obtain the bound
{
\begin{align*}
  \| \be_w \|^2 & \leq M \nu^{-1} \left( \| \nabla \be_u\|^{\frac12}\|\be_u\|^{\frac12} \| \bw_2 \| \| \be_w \|
    + \| \be_w \|^2 \| \nabla \bu_2\|^{\frac12}\| \bu_2\|^{\frac12} +  \| \nabla \bu_1 \|^{\frac12} \|\bu_1 \|^{\frac12}\| \be_w \|^2 \right. \\
  &+ \left. \| \nabla \be_u\|^{\frac12}\|\be_u\|^{\frac12}\| \bw_1 \| \| \be_w\| \right) \\
  &\leq M \nu^{-1} \left(  \| \nabla \be_u\|^{\frac12}\|\be_u\|^{\frac12} \| \bw_2 \|  + \| \nabla \be_u\|^{\frac12}\|\be_u\|^{\frac12}\| \bw_1 \| \right) \| \be_w \|
  + 2 \sqrt{2} M \nu^{-5/4} \alpha_+^{-3/4} \| \blf \| \| \be_w \|^2,
\end{align*}
which using the definition of $\alpha_1$, the bound given in \eqref{eq:H2estsmalldata}, and \eqref{alpha_b2} yields
\begin{equation}
\label{ewbound}
  \alpha_1 \| \be_w \| \leq M \nu^{-1} \left( \| \nabla \be_u\|^{\frac12}\|\be_u\|^{\frac12} \| \bw_2 \| + \| \nabla \be_u\|^{\frac12}\|\be_u\|^{\frac12}\| \bw_1 \| \right)
  \leq 2 M \nu^{-5/4} \alpha_+^{-1/4} K_1 \| \be_u \|_\ast.
\end{equation}}

Next, set $\bv=\be_u$ in the velocity error equation. This makes the pressure term  and one of the nonlinear terms vanish. Inequality \eqref{trilin3b}  then gives
\begin{equation}
\label{eubound}
  \begin{aligned}
    \|\be_u \|_\ast^2 &\le  M \| \be_w \| \| \nabla \bu_2 \| \| \nabla \be_u \|^{\frac12}\| \be_u \|^{\frac12}
 \le  M \nu^{-\frac14}\alpha_+^{-\frac14}\| \be_w \| \| \nabla \bu_2 \| \|\be_u \|_\ast  \\
 &\le  \sqrt{2} M \nu^{-\frac34}\alpha_+^{-\frac34} \| \blf \|  \| \be_w \| \|\be_u \|_\ast,
  \end{aligned}
\end{equation}
where, in the last step, we used the a priori bounds of Lemma~\ref{lem:aprioribounds} {and \eqref{alpha_b2}}.  Now using \eqref{ewbound} in \eqref{eubound}, we obtain
\[
 {\| \be_u \|^2_\ast  \le  2\sqrt{2} M^2 \nu^{-2}\alpha_+^{-1} \alpha_1^{-1} \| \blf \|  K_1  \| \be_u \|^2_\ast},
\]
which, by the second smallness assumption on the data yields {$\| \be_u \|_\ast=0$}.  From \eqref{ewbound} we immediately get that $\| \be_w \|=0$ and from Poincar\'e-Friedrichs' inequality $\|  \be_u \|=0$.   Now that we have
established $\be_u =0$ and $\be_w=0$, $e_P =0$ and $e_\eta=0$ follow from \eqref{infsup1}.
%
\end{proof}

\begin{remark}[{large $\alpha$}]
{As in Lemma \ref{lem:aprioribounds}}, if all the data besides $\alpha$ is fixed, Theorem \ref{thm:uniqu_stat} implies that uniqueness can be obtained by taking $\alpha$ sufficiently large.
\end{remark}

\subsection{Existence}
\label{sub:existence}

To prove the existence of solutions, we will utilize the following fixed point theorem, referred to as Shaefer's fixed point theorem in \cite{MR2597943} and as Leray-Schauder's fixed point theorem in \cite{GT83}.

\begin{lemma}[fixed point]
\label{LSlemma}
Let $Y$ be a real Banach space and $F:Y\rightarrow Y$ a compact map. Assume that the set of solutions to the family of fixed point problems:
\[
\mbox{find } y_{\lambda} \in Y \mbox{ satisfying } y_{\lambda} =\lambda F(y_{\lambda}), \ 0\le \lambda \le 1,
\]
are uniformly bounded. Then the problem $y^* = F(y^*)$ has a solution $y^*\in Y$.
\end{lemma}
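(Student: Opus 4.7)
The plan is to deduce Shaefer's theorem from Schauder's fixed point theorem via a radial retraction. Let $M > 0$ denote the uniform bound supplied by the hypothesis, so that every solution $y_\lambda$ of $y_\lambda = \lambda F(y_\lambda)$, $\lambda \in [0,1]$, satisfies $\|y_\lambda\| \le M$. Fix any $R > M$ and set $K := \{ y \in Y : \|y\| \le R\}$, which is closed, bounded, convex, and nonempty (the set of solutions is itself nonempty since $y_0 = 0$ solves the $\lambda = 0$ problem).

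First I would introduce the radial retraction $\rho: Y \to K$ defined by $\rho(y) = y$ when $\|y\| \le R$ and $\rho(y) = R y / \|y\|$ otherwise; this map is $1$-Lipschitz continuous. Setting $G := \rho \circ F$, the composition $G \colon K \to K$ is a compact map from the closed bounded convex set $K$ into itself, since $F$ is compact on the bounded set $K$ and $\rho$ is continuous. Schauder's fixed point theorem then produces some $y^{*} \in K$ with $y^{*} = \rho(F(y^{*}))$.

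The last step is to promote this to a genuine fixed point of $F$ by excluding the case where $\rho$ acts nontrivially at $y^{*}$. If $\|F(y^{*})\| \le R$, then $\rho(F(y^{*})) = F(y^{*})$ and $y^{*} = F(y^{*})$, as desired. If instead $\|F(y^{*})\| > R$, then $y^{*} = R F(y^{*}) / \|F(y^{*})\|$, which forces $\|y^{*}\| = R$ and simultaneously exhibits $y^{*} = \lambda_{0} F(y^{*})$ with $\lambda_{0} := R/\|F(y^{*})\| \in (0,1)$. But then $y^{*}$ itself belongs to the family of solutions postulated in the hypothesis, so $\|y^{*}\| \le M < R$, a contradiction; this alternative therefore cannot occur.

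The argument is essentially routine once Schauder's theorem is granted, and the only real content lies in packaging the hypothesis so that the uniform a priori bound eliminates the bad alternative for $y^{*}$. The main (minor) obstacle is verifying that $G = \rho \circ F$ does inherit compactness on $K$: this is where one uses that $F$ is compact on bounded sets and that $\rho$ is (Lipschitz) continuous, so $\overline{G(K)} \subset \rho(\overline{F(K)})$ is the continuous image of a compact set, hence compact. An essentially equivalent route avoiding the retraction trick uses the Leray-Schauder topological degree: the homotopy $H(y, \lambda) := y - \lambda F(y)$ has no zero on $\partial B_R$ thanks to the uniform bound, so $\deg(H(\cdot, \lambda), B_R, 0)$ is constant in $\lambda$ and equals $\deg(\mathrm{id}, B_R, 0) = 1$, yielding a zero of $H(\cdot, 1)$, i.e., a fixed point of $F$.
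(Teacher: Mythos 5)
Your proof is correct. Note that the paper does not prove this lemma at all --- it is quoted as Schaefer's (Leray--Schauder) fixed point theorem with citations to Evans and to Gilbarg--Trudinger --- so there is no in-paper argument to compare against; your radial-retraction-plus-Schauder argument is exactly the standard proof from those references, and the key step (excluding $\|F(y^*)\|>R$ because it would exhibit $y^*=\lambda_0 F(y^*)$ with $\lambda_0\in(0,1)$ and $\|y^*\|=R>M$, contradicting the a priori bound) is handled correctly by taking $R>M$ strictly.
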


We will proceed by constructing a compact map whose fixed points are solutions of \eqref{weak1}--\eqref{weak3}, then consider the family of fixed point problems, and finally apply Lemma \ref{LSlemma}.

Define $T:\bL^{2}(\Omega)\times\bW' \rightarrow \bX \times Q \times \bW\times\LL$ to be the solution operator of the following problem:
Find $(\bu,P,\bw,\eta)=T(\bg,\bl)\in  {\bX\times Q \times \bW\times \LL}$ satisfying for all $(\bv,{\pi},\bchi,\lambda)\in  {\bX\times Q \times \bW\times \LL}$
\begin{eqnarray}
\alpha (\bu, \bv) + \nu(\nabla \bu,\nabla \bv) - (P,\div \bv)  &=  & (\bg,\bv), \label{lin1} \\
(\div \bw, {\lambda})=(\div \bu, {\pi}) & = & 0, \label{lin2} \\
\alpha (\bw,\bchi)+ \nu(\rot \bw,\rot \bchi) + \nu(\div \bw,\div \bchi) - (\eta,\div \bchi)  & =&\bl(\bchi)- f_{\rm bc}(P,\bchi). \label{lin3}
\end{eqnarray}

\begin{lemma}[$T$ is well-defined]
\label{lem:gsmallTgood}
Let $(\bg,\bl) \in \bL^2(\Omega)\times\bW'$.  Then, problem \eqref{lin1}--\eqref{lin3} is well-posed and, as a consequence, $T$ is well-defined  and continuous. Moreover, $T(\bg,\bl)=(\bu,P,\bw,\eta)$ satisfies the following bounds:
\begin{eqnarray*}
  \| \nabla \bu \| & \le & \nu^{-1} \| \bg \|_{\bH^{-1}},\\
  {\nu}\| \bu \|_{\bH^2} + \| \nabla P \| & \le & C_S \| \bg \|,\\
 {\| \bw \|_\ast} +\|\eta\| & \le & C \left( \| \bl \|_{\bW'} + \| \bg \| \right),
\end{eqnarray*}
where $C$ and  $C_S$ are constants depending only on $\Omega$.
\end{lemma}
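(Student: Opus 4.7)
The plan is to exploit the decoupled structure of the linear system \eqref{lin1}--\eqref{lin3}: the velocity-pressure block \eqref{lin1}--\eqref{lin2} is a generalized Stokes problem that does not involve $\bw$ or $\eta$, and can be solved first; once $P$ is known, \eqref{lin3} together with the constraint $(\div\bw,\lambda)=0$ constitutes a linear saddle-point problem for $(\bw,\eta)$. I would therefore handle the two subsystems in turn, each by a standard application of Brezzi's theory.

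For the velocity-pressure subsystem, the bilinear form $\alpha(\bu,\bv)+\nu(\nabla\bu,\nabla\bv)$ is coercive on $\bX$ with constant $\nu$, and the divergence satisfies the classical Stokes inf-sup on $\bX\times\LL$ via \eqref{infsup1}. Existence, uniqueness, and continuous dependence follow. Testing the first equation with $\bv=\bu$ gives $\alpha\|\bu\|^2+\nu\|\nabla\bu\|^2\le\|\bg\|_{\bH^{-1}}\|\nabla\bu\|$, which simultaneously delivers $\nu\|\nabla\bu\|\le\|\bg\|_{\bH^{-1}}$ and $\alpha\|\bu\|\le\|\bg\|$. Rewriting the PDE as $-\nu\Delta\bu+\nabla P=\bg-\alpha\bu$ with right-hand side bounded by $2\|\bg\|$ in $\bL^2$, and invoking the Calderón--Zygmund estimate \eqref{eq:CZStokes} with $r=2$, yields $\nu\|\bu\|_{\bH^2}+\|\nabla P\|\le C_S\|\bg\|$ with $C_S=C_S(\Omega)$.

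For the vorticity-$\eta$ saddle-point problem, I would verify the Brezzi hypotheses directly on $\bW\times\LL$. The bilinear form
\[
  a(\bw,\bchi):=\alpha(\bw,\bchi)+\nu(\rot\bw,\rot\bchi)+\nu(\div\bw,\div\bchi)
\]
is continuous and, thanks to \eqref{equiv}, coercive on the \emph{entire} space $\bW$:
\[
  a(\bw,\bw)\ge\alpha\|\bw\|^2+\nu C_1^{-1}\|\nabla\bw\|^2\ge\min\{1,C_1^{-1}\}\|\bw\|_\ast^2.
\]
The inf-sup condition for $(\eta,\div\bchi)$ on $\bW\times\LL$ is inherited from the classical Stokes inf-sup \eqref{infsup1}, since $\bX\subset\bW$. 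The right-hand side $\bchi\mapsto\bl(\bchi)-f_{\rm bc}(P,\bchi)$ is bounded on $\bW$: the $\bl$ term by hypothesis, and the boundary term by \eqref{bdrybound}, which yields $|f_{\rm bc}(P,\bchi)|\le\|\nabla P\|\|\rot\bchi\|\le\|\nabla P\|\|\bchi\|_{\bH^1}$. Brezzi's theorem then produces a unique $(\bw,\eta)$ with $\|\bw\|_\ast+\|\eta\|\le C(\|\bl\|_{\bW'}+\|\nabla P\|)$, and combining with the already established bound on $\|\nabla P\|$ gives the stated estimate. Continuity of $T$ is immediate from the linearity of the problem and the bounds just derived.

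The key subtle point is that coercivity of $a$ must hold on all of $\bW$, not just on the divergence-free subspace, because the divergence constraint is enforced weakly through $\eta$ rather than built into the solution space. This is exactly where the explicit penalty term $\nu(\div\bw,\div\bchi)$ in \eqref{lin3} pays off: together with $\nu(\rot\bw,\rot\bchi)$ it controls the full $\bH^1$-seminorm via \eqref{equiv}, so that coercivity is obtained on $\bW$ without restriction.
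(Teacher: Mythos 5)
Your proposal is correct and follows essentially the same route as the paper: the paper's (very terse) proof likewise decouples the system, obtains the velocity bound by testing with $\bv=\bu$ as in the proof of \eqref{eq:velestimate}, gets the $\bH^2\times H^1$ bound from \eqref{eq:CZStokes} with $r=2$, and uses \eqref{equiv} to get coercivity for the vorticity block. Your write-up merely fills in the "standard arguments" (Brezzi verification, inf-sup inherited from $\bX\subset\bW$, boundedness of $f_{\rm bc}$ via \eqref{bdrybound}) that the paper leaves implicit.
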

\begin{proof}
The bounds follow standard arguments. The first bound repeats the proof of \eqref{eq:velestimate}, while the second is \eqref{eq:CZStokes} for $r=2$. Once this is established, we invoke \eqref{equiv} to conclude the last one.
\end{proof}

Next, for $\blf \in \bL^2(\Omega)$  we define the nonlinear operator $N:\bX\times Q\times \bW\times\LL \rightarrow \bL^2(\Omega)\times\bW'$ as follows:
\begin{align*}
  N(\bu,P,\bw,\eta)_1 &:= \blf - \bw \times \bu  \in \bL^2(\Omega), \\
  \langle N(\bu,P,\bw,\eta)_2, \bchi \rangle_{\bW',\bW} &:=  (\blf,\rot\bchi)+b(\bw,\bu,\bchi)- b(\bu,\bw,\bchi)
\end{align*}
where we have that the first component $N(\bu,P,\bw,\eta)_1$ belongs to $\bL^2(\Omega)$ because we have that $\bu,\bw \in \bW \hookrightarrow \bL^4(\Omega)$. Moreover, the estimates of Lemma \ref{trilinbounds} guarantee that $N(\bu,P,\bw,\eta)_2 \in \bW'$. Note also that, for $(\bu,\bw) \in \bX\times\bW$ we have
\begin{align*}
  \int_\Omega |\nabla(\bw \times \bu)|^\frac{3}{2}\diff x &\le
    C \left(\int_\Omega |\nabla\bw|^\frac{3}{2} |\bu|^\frac{3}{2}\diff x+\int_\Omega |\nabla\bu|^\frac{3}{2} |\bw|^\frac{3}{2}\diff x \right) \\
    &\leq C \left( \|\nabla\bw\|^\frac{3}{2} \|\bu\|_{\bL^{6}}^\frac{3}{2}+\|\nabla\bu\|^\frac{3}{2} \|\bw\|_{\bL^{6}}^\frac{3}{2} \right)\\
    & \le  C \|\nabla \bu\|^\frac{3}{2} \|\nabla \bw\|^\frac{3}{2}.
\end{align*}
Hence $\bw \times \bu\in \bW^{1,\frac32}(\Omega)$,  which is compactly embedded in $\bL^2(\Omega)$. By the same arguments we  also have that   $(\bw\cdot\nabla)\bu-
(\bu\cdot\nabla)\bw\in\bL^{\frac32}(\Omega)$ for $\bu,\bw\in\bX\times\bW$. Since $\bH^1(\Omega)$ is compactly embedded in $\bL^{3}(\Omega) $, we have the compact embedding  $\bL^{\frac32}(\Omega)=(\bL^{3}(\Omega))'\hookrightarrow\hookrightarrow (\bH^{1}(\Omega))'\subset\bW'$; see \cite[Theorem 5.11.2]{MR3136903}.
Therefore, we  conclude that the operator  $N$ is well defined and compact.

Define now $F: \bX \times Q \times \bW\times\LL \to \bX \times Q \times \bW\times\LL$ as
\begin{equation}\label{F}
F(\bu,P,\bw,\eta)=T(N(\bu,P,\bw,\eta)).
\end{equation}
By a superposition of continuous and compact operators, this defines a compact operator.
We have now established that the operator defined by \eqref{F} is a compact map from $\bX\times Q \times \bW\times\LL$ into itself.  Further, note that solutions of the fixed point problem
\[
(\bu,P,\bw,\eta) = F(\bu,P,\bw,\eta),
\]
are solutions of \eqref{weak1}--\eqref{weak3}.

We are now ready to show the existence of solutions.

\begin{theorem}[existence]
\label{thm:existence_stat}
Assume that $\blf \in \bL^2(\Omega)$ is such that \eqref{eq:f_small} holds, then problem \eqref{weak1}--\eqref{weak3} has at least one solution. Thus, for large enough $\alpha$ the solution exists and is unique.
\end{theorem}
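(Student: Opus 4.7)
The plan is to apply Lemma \ref{LSlemma} to the compact map $F = T \circ N$ constructed above, whose fixed points are precisely the solutions of \eqref{weak1}--\eqref{weak3}. It remains to show that the set $\{y_\lambda : y_\lambda = \lambda F(y_\lambda),\ \lambda \in [0,1]\}$ is uniformly bounded in $\bX \times Q \times \bW \times \LL$. The second claim, that existence and uniqueness both hold for large $\alpha$, then follows by combining this existence result with Theorem \ref{thm:uniqu_stat}, since the smallness condition \eqref{eq:f_small} and the two hypotheses of Theorem \ref{thm:uniqu_stat} are all verified once $\alpha$ is taken sufficiently large, all other data remaining fixed.

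The first step is to unfold the equation $y_\lambda = \lambda T(N(y_\lambda))$. Writing $y_\lambda = (\bu, P, \bw, \eta)$ and exploiting the linearity of $T$ together with the linearity of $P \mapsto f_{\rm bc}(P,\cdot)$, a direct rescaling shows that $y_\lambda$ satisfies a $\lambda$-homotopy of \eqref{weak1}--\eqref{weak3}: every nonlinear term and external forcing appears multiplied by $\lambda$, while the boundary functional $f_{\rm bc}(P,\bchi)$ and the zero-order terms $\alpha(\bu,\bv),\alpha(\bw,\bchi)$ survive unscaled. This asymmetry, caused by $T$ absorbing the pressure into its right-hand side, is the technical crux of the argument.

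The second step is to reproduce, uniformly in $\lambda\in[0,1]$, the four a priori bounds of Lemma \ref{lem:aprioribounds}. Testing the velocity equation with $\bv=\bu$ and invoking \eqref{alpha_b} yields $\|\bu\|_\ast \leq \sqrt{2}\alpha_+^{-1/2}\|\blf\|$ regardless of $\lambda$, so under \eqref{eq:f_small} the vector $\lambda\bu$ still satisfies \eqref{eq:exisence_condition} and $\widehat{\Delta}^{-1}_{\lambda\bu}$ is well defined by Lemma \ref{Lreg}. Testing the vorticity equation with $\bchi=\widehat{\Delta}^{-1}_{\lambda\bu}\bw$ eliminates the $\eta$-term by solenoidality, and kills the unscaled $f_{\rm bc}(P,\cdot)$-term via \eqref{vanish}; the $\lambda$-scaled convective contributions then collapse to $\|\bw\|^2$ through the fourth identity in \eqref{prop} applied with $\ba=\lambda\bu$, yielding $\|\bw\| \leq \lambda C\nu^{-1}\|\blf\|_{\bH^{-1}} \leq K_1$ uniformly in $\lambda$.

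With the $\|\cdot\|_\ast$-bound on $\bu$ and the $L^2$-bound on $\bw$ in hand, the bootstrap sequence $\bL^{3/2}\to\bL^{7/4}\to\bL^2$ based on the Calder\'on--Zygmund estimate \eqref{eq:CZStokes} applied to $\alpha\bu-\nu\Delta\bu+\nabla P = \lambda(\blf-\bw\times\bu)$ delivers $\nu\|\bu\|_{\bH^2}+\|\nabla P\| \leq K_2$ uniformly (the extra $\lambda\leq 1$ factors only strengthen each bound). Testing finally with $\bchi=\bw$, using the skew-symmetry $b(\bu,\bw,\bw)=0$, the estimate \eqref{bdrybound} for $f_{\rm bc}(P,\bw)$, and the controls already obtained, closes out $\|\bw\|_\ast+\|\eta\|\leq K_3$ uniformly in $\lambda$. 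Lemma \ref{LSlemma} then supplies the desired fixed point, hence a solution of \eqref{weak1}--\eqref{weak3}. The hardest point to isolate is closing the $L^2$-estimate for $\bw$ in the presence of the unscaled $f_{\rm bc}(P,\cdot)$: the identity \eqref{vanish}, being insensitive to any rescaling of $P$, is what rescues the homotopy and allows the rest of the a priori machinery to run unchanged.
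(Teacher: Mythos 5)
Your proposal is correct and follows essentially the same route as the paper: Schaefer's fixed point theorem applied to the compact map $F=T\circ N$, with the homotopy producing $\lambda$-scaled nonlinearities and forcing while the boundary functional $f_{\rm bc}$ and the $\alpha$-terms remain unscaled, and uniform a priori bounds obtained by rerunning Lemma \ref{lem:aprioribounds}. You are in fact slightly more explicit than the paper on the one delicate point, namely that the correct test function is $\widehat{\Delta}^{-1}_{\lambda\bu}\bw$ (so the fourth identity in \eqref{prop} with $\ba=\lambda\bu$ matches the $\lambda$-scaled convective terms) and that \eqref{vanish} kills the unscaled $f_{\rm bc}(P,\cdot)$ regardless of $\lambda$.
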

\begin{proof}
Consider the family of fixed point problems, with $\lambda\in [0,1]$:
\[
(\bu_{\lambda},P_{\lambda},\bw_{\lambda},\eta_{\lambda}) = \lambda F(\bu_{\lambda},P_{\lambda},\bw_{\lambda},\eta_{\lambda}).
\]
Decomposing $F$ and noting that $\lambda T(\bg)=T(\lambda \bg)$, we have that
\begin{eqnarray*}
(\bu_{\lambda},P_{\lambda},\bw_{\lambda},\eta_{\lambda}) & = & \lambda T(N(\bu_{\lambda},P_{\lambda},\bw_{\lambda},\eta_{\lambda}))  \\
& = & T( \lambda N(\bu_{\lambda},P_{\lambda},\bw_{\lambda},\eta_{\lambda})),
\end{eqnarray*}
and thus for a given $\lambda \in [0,1]$, solutions to the associated fixed point problem satisfy
\begin{align}
 \alpha (\bu_{\lambda},\bv) + \nu(\nabla \bu_{\lambda},\nabla \bv)+ \lambda (\bw_{\lambda} \times \bu_{\lambda},\bv) - (P_{\lambda},\div \bv)  &=   (\lambda \blf ,\bv), \label{lin1b} \\
(\div \bw_{\lambda},q)=(\div \bu_{\lambda},\pi) & =  0, \label{lin2b} \\
\alpha( \bw_{\lambda}, \bchi) +\nu(\rot \bw_{\lambda},\rot \bchi) + \nu(\div \bw_{\lambda},\div \bchi) + \lambda &b(\bu_{\lambda},\bw_{\lambda},\bchi)  \nonumber \\ - \lambda b(\bw_{\lambda},\bu_{\lambda},\bchi) & =
(\lambda \blf,\rot \bchi)  \label{lin3b}
-f_{\rm bc}(P_{\lambda},\bchi),
\end{align}
for every $(\bv,\pi,\bchi,q)\in  \bX\times Q \times \bW\times\LL$.  To obtain a priori bounds on the solutions, we note that this system is identical to \eqref{weak1}--\eqref{weak3}, except that the right hand side $\blf$ is scaled by $\lambda$, as are the nonlinear terms in the velocity and vorticity equations.  Since $0\le \lambda\le 1$, the same proof for a priori bounds as is done for \eqref{weak1}--\eqref{weak3} in Lemma \ref{lem:aprioribounds} can be repeated for this system, and the only difference in the bounds is the dependence of the constants on $\lambda^k$, with $k\ge 0$.  But since $\lambda \le 1$, the following bounds hold uniformly in $\lambda$:
\[
\| \nabla \bu_{\lambda} \| + \| \nabla P_{\lambda} \| + \| \nabla \bw_{\lambda} \| +\| \eta_{\lambda} \|\le C,
\]
where the constant $C$ depends only on the problem data.
Thus by Lemma \ref{LSlemma}, there exists a fixed point for $F$ and thus a solution for \eqref{weak1}--\eqref{weak3}.
\end{proof}

\section{Conclusions and Outlook}
\label{sec:conclusion}

We have proven well-posedness of a steady {velocity-vorticity} system with no-slip velocity boundary conditions, no penetration vorticity boundary conditions, and a natural boundary condition for vorticity involving a pressure functional. The well-posedness result provides a mathematical foundation for numerical methods, which exploit vorticity equations with these boundary conditions, and suggests a possible framework for their numerical analysis.

Significant technical difficulties arose due to the use of vorticity, vorticity boundary conditions, and the boundary conditions containing a pressure functional, and were overcome by a long, technical analysis that combined and extended analyses from \cite{girault1990curl,GR86,borchers1990equations}.  These results are important for at least two reasons in addition to that stated above: First, there are very few analytical results known for fluid problems involving vorticity boundary conditions. 
Second, several results in section \ref{sec:prelim} are new, and can aid in future works for related PDEs which use non-standard boundary conditions.

Moving forward, these results will allow for improved analysis of numerical methods for NSE in velocity-vorticity formulations, and thus likely also to improved algorithms.  To date, analysis of numerical schemes for these systems has been limited to 2D or the 3D steady case, and without physically derived boundary conditions, e.g. \cite{ARZ17,LOR11,HOR17}.

\section*{Acknowledgments}
The authors wish to thank Prof. Roger Temam for a helpful discussion regarding this work.

\bibliographystyle{plain}
\bibliography{references}

\begin{thebibliography}{10}

\bibitem{Ada75}
R.~A. Adams.
\newblock {\em Sobolev Spaces}.
\newblock Academic Press, New York, 1975.

\bibitem{ARZ17}
M.~Akbas, L.~Rebholz, and C.~Zerfas.
\newblock Optimal vorticity accuracy in an efficient velocity-vorticity method
  for the {2D Navier-Stokes} equations.
\newblock {\em Calcolo}, in revision, 2017.

\bibitem{BORW12}
M.~Benzi, M.~A. Olshanskii, L.~G. Rebholz, and Z.~Wang.
\newblock Assessment of a vorticity based solver for the {Navier-Stokes}
  equations.
\newblock {\em Computer Methods in Applied Mechanics and Engineering},
  247:216--225, 2012.

\bibitem{borchers1990equations}
W.~Borchers and H.~Sohr.
\newblock On the equations rot v= g and div u= f with zero boundary conditions.
\newblock {\em Hokkaido Math. J}, 19(1):67--87, 1990.

\bibitem{CHOR16}
S.~Charnyi, T.~Heister, M.~Olshanskii, and L.~Rebholz.
\newblock On conservation laws of {Navier-Stokes Galerkin} discretizations.
\newblock {\em Journal of Computational Physics}, 337:289--308, 2017.

\bibitem{MR3136903}
P.~G. Ciarlet.
\newblock {\em Linear and nonlinear functional analysis with applications}.
\newblock Society for Industrial and Applied Mathematics, Philadelphia, PA,
  2013.

\bibitem{ern2013theory}
A.~Ern and J.-L. Guermond.
\newblock {\em Theory and practice of finite elements}, volume 159.
\newblock Springer Science \& Business Media, 2013.

\bibitem{MR2597943}
L.~C. Evans.
\newblock {\em Partial Differential Equations}, volume~19 of {\em Graduate
  Studies in Mathematics}.
\newblock American Mathematical Society, Providence, RI, second edition, 2010.

\bibitem{galdi2011introduction}
G.~P Galdi.
\newblock {\em An introduction to the mathematical theory of the Navier-Stokes
  equations: Steady-state problems}.
\newblock Springer Science \& Business Media, 2011.

\bibitem{G91}
T.B. Gatski.
\newblock Review of incompressible fluid flow computations using the
  vorticity-velocity formulation.
\newblock {\em Appl. Numer. Math.}, 7:227--239, 1991.

\bibitem{GT83}
D.~Gilbarg and N.~S. Trudinger.
\newblock {\em Elliptic partial differential equations of second order}.
\newblock Springer-Verlag, New York, 1983.

\bibitem{girault1990curl}
V~Girault.
\newblock {\em Curl-conforming finite element methods for {Navier-Stokes}
  equations with nonstandard boundary conditions}.
\newblock The Navier-Stokes Equations Theory and Numerical Methods. Springer,
  1990.

\bibitem{GR86}
V.~Girault and P.-A. Raviart.
\newblock {\em Finite element methods for Navier-Stokes equations: Theory and
  Algorithms}.
\newblock Springer-Verlag, 1986.

\bibitem{GS98}
P.~Gresho and R.~Sani.
\newblock {\em Incompressible {F}low and the {F}inite {E}lement {M}ethod},
  volume~2.
\newblock Wiley, 1998.

\bibitem{grisvard2011elliptic}
P.~Grisvard.
\newblock {\em Elliptic problems in nonsmooth domains}.
\newblock SIAM, Philadelphia, 2011.

\bibitem{GHH90}
G.~Guevremont, W.~G. Habashi, and M.~M. Hafez.
\newblock Finite element solution of the {Navier-Stokes} equations by a
  velocity-vorticity method.
\newblock {\em Int. J. Numer. Methods Fluids}, 10:461--475, 1990.

\bibitem{G89}
M.~D. Gunzburger.
\newblock {\em {Finite element methods for viscous incompressible flows: A
  guide to theory, practice, and algorithm}}.
\newblock Academic Press Inc., Boston, 1989.

\bibitem{HOR17}
T.~Heister, M.~A. Olshanskii, and L.~G. Rebholz.
\newblock Unconditional long-time stability of velocity-vorticity method for
  2{D} {Navier-Stokes} equations.
\newblock {\em Numerische Mathematik}, 135:143--167, 2017.

\bibitem{laytonbook}
W.~Layton.
\newblock {\em Introduction to Finite Element Methods for Incompressible,
  Viscous Flow}.
\newblock SIAM, Philadelphia, 2008.

\bibitem{LOR11}
H.~K. Lee, M.~A. Olshanskii, and L.~G. Rebholz.
\newblock {On error analysis for the 3D Navier-Stokes equations in
  Velocity-Vorticity-Helicity form}.
\newblock {\em SIAM J. Numer. Anal.}, 49(2):711--732, 2011.

\bibitem{LYM06}
D.~C. Lo, D.~L. Young, and K.~Murugesan.
\newblock An accurate numerical solution algorithm for 3d velocity-vorticity
  navier-stokes equations by the dq method.
\newblock {\em Commun. Numer. Meth. Engng}, 22:235--250, 2006.

\bibitem{majda2002vorticity}
A.~J. Majda and A.~L. Bertozzi.
\newblock {\em Vorticity and incompressible flow}, volume~27.
\newblock Cambridge University Press, 2002.

\bibitem{MF00}
H.~L. Meitz and H.~F. Fasel.
\newblock A compact-difference scheme for the {Navier-Stokes} equations in
  vorticity-velocity formulation.
\newblock {\em J. Comput. Phys.}, 157:371--403, 2000.

\bibitem{GHOR15}
M.~A. Olshanskii, T.~Heister, L.~Rebholz, and K.~Galvin.
\newblock Natural vorticity boundary conditions on solid walls.
\newblock {\em Computer Methods in Applied Mechanics and Engineering},
  297:18--37, 2015.

\bibitem{OR10}
M.~A. Olshanskii and L.~G. Rebholz.
\newblock Velocity-vorticity-helicity formulation and a solver for the
  {N}avier-{S}tokes equations.
\newblock {\em Journal of Computational Physics}, 229:4291--4303, 2010.

\bibitem{palha2017mass}
A.~Palha and M.~Gerritsma.
\newblock A mass, energy, enstrophy and vorticity conserving ({MEEVC}) mimetic
  spectral element discretization for the 2d incompressible {N}avier--{S}tokes
  equations.
\newblock {\em Journal of Computational Physics}, 328:200--220, 2017.

\bibitem{T77}
R.~Temam.
\newblock {\em {Navier-Stokes} Equations: Theory and Numerical Analysis}.
\newblock North Holland Publishing Company, New York, 1977.

\bibitem{WB02}
K.~L. Wong and A.~J. Baker.
\newblock A 3d incompressible {N}avier-{S}tokes velocity-vorticity weak form
  finite element algorithm.
\newblock {\em Int. J. Numer. Meth. Fluids}, 38:99--123, 2002.

\bibitem{WWW95}
X.~H. Wu, J.~Z. Wu, and J.~M. Wu.
\newblock Effective vorticity-velocity formulations for the three-dimensional
  incompressible viscous flows.
\newblock {\em J. Comput. Phys.}, 122:68--82, 1995.

\end{thebibliography}

\end{document}